\newcommand{\bbN}{\mathbb{N}}
\newcommand{\bbR}{\mathbb{R}}
\newcommand{\bbT}{\mathbb{T}}
\newcommand{\bbZ}{\mathbb{Z}}
\newcommand{\calK}{\mathcal{K}}
\newcommand{\calL}{\mathcal{L}}
\newcommand{\calR}{\mathcal{R}}
\newcommand{\calS}{\mathcal{S}}
\newcommand{\calU}{\mathcal{U}}
\DeclareMathOperator{\re}{Re} 
\newcommand{\argument}{\mathord{\,\cdot\,}} 
\newcommand{\dx}{\;\mathrm{d}} 
\DeclareMathOperator{\Fix}{Fix} 
\newcommand{\norm}[1]{\left\lVert #1 \right\rVert} 
\newcommand{\modulus}[1]{\left\lvert #1 \right\rvert} 
\DeclareMathOperator{\Ima}{Rg} 
\newcommand\restrict[1]{\raisebox{-.5ex}{$|$}_{#1}} 
\newcommand{\spec}{\sigma} 
\newcommand{\Res}{\mathcal{R}} 
\newcommand{\appSpec}{\spec_{\operatorname{app}}} 
\newcommand{\pntSpec}{\spec_{\operatorname{pnt}}} 
\newcommand{\perSpec}{\spec_{\operatorname{per}}} 
\newcommand{\spr}{r} 
\newcommand{\spb}{s} 
\newcommand{\gbd}{\omega} 
\newcommand{\Implies}[2]{``\ref{#1} $\Rightarrow$ \ref{#2}'':}
\theoremstyle{definition}
\newtheorem{definition}{Definition}[section]
\newtheorem{remark}[definition]{Remark}
\newtheorem{remarks}[definition]{Remarks}
\newtheorem*{remark*}{Remark}
\newtheorem*{remarks*}{Remarks}
\theoremstyle{plain}
\newtheorem{proposition}[definition]{Proposition}
\newtheorem{lemma}[definition]{Lemma}
\newtheorem{theorem}[definition]{Theorem}
\newtheorem{corollary}[definition]{Corollary}
\numberwithin{equation}{section} 
\begin{document}

\title[Spectrum and convergence]{Spectrum and convergence of eventually positive operator semigroups}
\author{Sahiba Arora}
\address{Sahiba Arora, Technische Universität Dresden, Institut für Analysis, Fakultät für Mathematik , 01062 Dresden, Germany}
\email{sahiba.arora@mailbox.tu-dresden.de}
\author{Jochen Gl\"uck}
\address{Jochen Gl\"uck, Universität Passau, Fakultät für Informatik und Mathematik, 94032 Passau, Germany}
\email{jochen.glueck@uni-passau.de}
\subjclass[2010]{47D06; 47B65; 47A10}
\keywords{Eventual positivity; convergence; long-term behaviour; stability; balancing semigroup; asynchronous exponential growth}
\date{\today}

\begin{abstract}
	An intriguing feature of positive $C_0$-semigroups on function spaces (or more generally on Banach lattices) is that their long-time behaviour is much easier to describe than it is for general semigroups. In particular, the convergence of semigroup operators (strongly or in the operator norm) as time tends to infinity can be characterized by a set of simple spectral and compactness conditions.
	
	In the present paper, we show that similar theorems remain true for the larger class of (uniformly) eventually positive semigroups -- which recently arose in the study of various concrete differential equations.
	
	A major step in one of our characterizations is to show a version of the famous Niiro--Sawashima theorem for eventually positive operators. Several proofs for positive operators and semigroups do not work in our setting any longer, necessitating different arguments and giving our approach a distinct flavour.
\end{abstract}

\maketitle

\section{Introduction}
\label{section:introduction}

\subsection*{Eventual positivity}

While positive $C_0$-semigroups are, today, a well-established tool for the analysis of linear evolution equations that respect an order structure on the underlying space, it was only recently that a more subtle behaviour was observed for a number of infinite-dimensional differential equations. For instance, consider the bi-harmonic heat equation on $\bbR^d$ \cite{FerreroGazzolaGrunau2008, GazzolaGrunau2008}, or the parabolic equation associated with the Dirichlet-to-Neumann operator on the unit circle \cite{Daners2014}; for positive initial values, the solutions to these equations first switch sign but eventually become, and stay, positive.
	
After these observations, this kind of \emph{eventually positive} behaviour became the subject of a general theory of \emph{eventually positive semigroups} that started with the papers \cite{DanersGlueckKennedy2016a, DanersGlueckKennedy2016b}. Based on the theory developed so far, numerous further examples of concrete evolution equations have been shown to exhibit eventually positive solutions. This range of examples includes
\begin{enumerate}[(i)]
	\item 
	biharmonic equations with various types of zero boundary conditions on bounded domains (\cite[Section~6.4]{DanersGlueckKennedy2016a}, \cite[Theorem~6.1 and Proposition~6.6]{DanersGlueckKennedy2016b}, and \cite[Theorem~4.4]{DanersGlueck2018b}),
	
	\item 
	heat equations with non-local boundary conditions (\cite[Theorems~6.10, 6.11 and~6.13]{DanersGlueckKennedy2016b}, and \cite[Theorems~4.2 and~4.3]{DanersGlueck2018b}) as well as their self-adjoint bounded perturbations (\cite[Example~4.10]{DanersGlueck2018a}),
	
	\item 
	various delay differential equations (\cite[Section~6.5]{DanersGlueckKennedy2016a}, \cite[Section~11.6]{Glueck2016}, and \cite[Theorem~4.6]{DanersGlueck2018b}),
	 
	\item
	bi-Laplacians on graphs (\cite[Section~6]{GregorioMugnolo2020}), and
	
	\item
	the bi-Laplace operator with Wentzell boundary conditions (\cite[Section~7]{DenkKunzePloss2020}).
\end{enumerate}

Likewise, the closely related properties \emph{asymptotic positivity}, \emph{local eventual positivity}, and \emph{eventual domination} have been shown to occur in many concrete evolution equations; for examples of those we refer the reader to \cite[Chapter~8]{Glueck2016}, \cite[Section~5]{Arora2021}, \cite[Section~3]{AddonaGregorioRhandiTacelli2021},  and \cite[Section~4]{GlueckMugnolo2021}.
On finite-dimensional spaces, a similar theory of eventual positivity had already emerged several years earlier, as can, for instance, be seen in the paper \cite{NoutsosTsatsomeros2008}.

Despite this wealth of concrete examples, our current understanding of the theoretical features of eventually positive semigroups lags considerably behind that of the classical positive case. This makes a thorough investigation of various theoretical properties of eventually positive semigroups a worthwhile endeavour.
The present paper is part of this investigation, and it focuses on \emph{convergence to equilibrium} for such semigroups -- a topic which has been intensively studied for positive semigroups, but which currently lacks similar thorough coverage for the eventually positive case.

\subsection*{Spectrum and long-time behaviour}

One of the fundamental properties of positive (or, as they are often called within matrix analysis, \emph{non-negative}) matrices and matrix semigroups is their special spectral behaviour. This behaviour is studied and described -- in the finite-dimensional case as well as for operators and operator semigroups in infinite dimensions -- in what is today known as \emph{Perron--Frobenius theory} or \emph{Kre\u{\i}n--Rutman theory}. A striking consequence of the results of this theory is that the long-time behaviour of positive $C_0$-semigroups is subject to certain restrictions -- an observation that greatly facilitates the proof of convergence to equilibrium (as $t \to \infty$) of many positive semigroups. For details, we refer, for instance, to the book chapters \cite[Chapters~B-IV and~C-IV]{Nagel1986} and \cite[Chapter~14]{BatkaiKramarRhandi2017} and, specifically to the results in \cite{Thieme1998}.

\subsection*{Contributions of this article}

The purpose of this article is to show that similar convergence results, as for positive semigroups remain true in the case of eventual positivity. While some of our techniques will be familiar to the experts in the field of positive semigroups, we have to differ from classical arguments on several occasions since our semigroups are positive only for large times, which renders some of the known methods unsuitable. This particularly concerns the point that it is unknown whether the generator of a (uniformly) eventually positive semigroup has cyclic peripheral spectrum under the same general conditions as in the positive case; we do not solve this question here, and thus have to circumvent this problem by using different arguments than usual to obtain convergence as $t \to \infty$.

Specifically, we prove three types of main results:

\begin{enumerate}[(i)]
	\item In Sections~\ref{section:strong-convergence-i} and~\ref{section:strong-convergence-ii} we give criteria for the \emph{strong convergence} of an eventually positive $C_0$-semigroup as $t \to \infty$ (Corollary~\ref{cor:strong-convergence-i}, Theorem~\ref{thm:strong-convergence-ii}, and Corollary~\ref{cor:strong-convergence-ii-reflexive}).
	
	\item Section~\ref{section:niiro-sawashima} is an intermezzo where we study single operators with eventually positive powers rather than eventually positive semigroups. For such operators, we prove a Niiro--Sawashima type result which allows us to transfer information about the spectral radius of an eventually positive operator to other peripheral spectral values (Theorem~\ref{thm:niiro-sawashima}). This is another instance where our proof has to differ considerably from the standard proofs for the positive case.
	
	Besides being interesting in its own right, our Niiro--Sawashima type theorem is also a very crucial component for the study of our third topic:
	
	\item In Section~\ref{section:uniform-convergence} we characterize \emph{operator norm convergence} of eventually positive $C_0$-se\-mi\-groups as $t \to \infty$ (Theorems~\ref{thm:uniform-convergence} and \ref{thm:uniform-convergence-finite-residuum}).
\end{enumerate}

It is important to point out that our objectives in this paper are theoretical in nature.
Under much more restrictive and technical assumptions than presented here, the convergence of eventually positive semigroups is known and is even an essential ingredient in the characterization of eventual positivity;
see, for instance, \cite[Theorem~5.2]{DanersGlueckKennedy2016b} for strong convergence, and combine it with \cite[Corollary~2.5]{DanersGlueck2017} for operator norm convergence.
In particular, for most concrete semigroups that are today known to be eventually positive, convergence as $t \to \infty$ is well-understood.

So the purpose of our present paper is not to better understand those concrete examples, but: 
(i) to prove convergence results under weaker and less technical assumptions, thus exploring the current limits of the theory;
(ii) to bring the state of the art in eventual positivity closer to what is already known for positive semigroups, thereby highlighting the methodological differences between both situations;
and (iii) to provide necessary conditions for semigroups to be eventually positive, which narrows down the future path towards examples of eventually positive semigroups that do not fall within the framework studied in \cite{DanersGlueckKennedy2016b}.

\subsection*{A bit of terminology}

Throughout the paper, we assume the reader to be familiar with the theory of real and complex Banach lattices; standard references for this topic include the monographs \cite{Schaefer1974, Meyer-Nieberg1991} as well as the double volume \cite{LuxemburgZaanen1971, Zaanen1983}. As a standard example of a complex Banach lattice, it is often helpful to keep a complex-valued $L^p$-space (over any measure space) in mind. So, let $E$ be a complex Banach lattice. We denote the space of bounded linear operators on $E$ by $\calL(E)$. The range of an operator $T \in \calL(E)$ will be denoted by $\Ima T$.

A $C_0$-semigroup on $E$ with generator $A$ -- for which we use the convenient notation $(e^{tA})_{t \in [0,\infty)}$ -- is called \emph{individually eventually positive} if, for each $0 \le f \in E$, there exists a time $t_0 \in [0,\infty)$ such that $e^{tA}f \ge 0$ for all $t \ge t_0$; the semigroup is called \emph{uniformly eventually positive} if $t_0$ can be chosen to be independent of $f$. For two examples which demonstrate that uniform eventual positivity is indeed stronger than individual eventual positivity, we refer to \cite[Examples~5.7 and~5.8]{DanersGlueckKennedy2016a}.

Clearly, a $C_0$-semigroup $(e^{tA})_{t \in [0,\infty)}$ is uniformly eventually positive if and only if there exists a time $t_0 \in [0,\infty)$ such that $e^{tA}$ is a positive operator for each time $t \ge t_0$; here, an operator $T: E \to E$ is called \emph{positive} if $Tf \ge 0$ for each $0 \le f \in E$. There are various other types of eventual positivity, which are for instance discussed in \cite[Definition~5.1]{DanersGlueckKennedy2016a} and \cite[Definitions~5.1 and~8.1]{DanersGlueckKennedy2016b}
-- but our main focus is on uniformly eventually positive semigroups. 

Throughout the paper, it is useful to keep in mind that the spectral bound $\spb(A)$ of the generator $A$ -- which is defined as
\[
	\spb(A):=\sup\{\re \lambda: \lambda\in \spb(A)\} \in [-\infty,\infty]
\]
-- of an individually eventually positive semigroup is either $-\infty$ or an element of the spectrum of $\spec(A)$; this was proved in \cite[Theorem~7.6]{DanersGlueckKennedy2016a}.

A $C_0$-semigroup $(e^{tA})_{t \ge 0}$ on $E$ is called \emph{real} if each operator $e^{tA}$ is \emph{real}, by which we mean that it leaves the real part of the complex Banach lattice $E$ invariant. Further notation and terminology are introduced as they are needed.

\section{Strong convergence of eventually positive semigroups I}
\label{section:strong-convergence-i}

In this section, we give a first criterion for an eventually positive semigroup to converge strongly as $t \to \infty$. In general, it is known that a $C_0$-semigroup $(e^{tA})_{t \in [0,\infty)}$ on a (say, complex) Banach space $E$ converges strongly as $t \to \infty$ if and only if all orbits are relatively compact in $E$ and the point spectrum $\pntSpec(A)$  of $A$ intersects the imaginary axis at most in $0$. This follows, for instance, from the Jacobs--de Leeuw--Glicksberg decomposition of operator semigroups; see e.g.\ \cite[Theorem~V.2.14]{EngelNagel2000}.

If $E$ is a Banach lattice and our semigroup is eventually positive, the aforementioned spectral condition can be formally weakened: it suffices to assume that the intersection of the point spectrum $\pntSpec(A)$ with the imaginary axis is bounded. This is a consequence of the following theorem which, in the language of Perron--Frobenius theory, can be called a \emph{cyclicity result}.

\begin{theorem}[Cyclicity of the peripheral point spectrum]
	\label{thm:cyclic-point-spectrum}
	
	Let $(e^{tA})_{t \in [0,\infty)}$ be an individually eventually positive $C_0$-semigroup on a complex Banach lattice $E$. Assume that, for each $f \in E$, the orbit $\{e^{tA}f: \, t \in [0,\infty)\}$ is relatively compact with respect to the weak topology on $E$.
	
	If $i\beta \in i \bbR$ is an eigenvalue of $A$, then $in\beta$ is also an eigenvalue of $A$ for each $n \in \bbZ$.
\end{theorem}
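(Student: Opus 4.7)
The strategy mirrors the classical cyclicity argument for the peripheral point spectrum of positive $C_0$-semigroups, with care taken because eventual positivity makes the basic modulus estimate available only for sufficiently large times. We may assume $\beta\neq 0$, the case $\beta=0$ being trivial.

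Let $f\neq 0$ be an eigenvector with $Af=i\beta f$, so that $e^{tA}f=e^{it\beta}f$ for every $t\ge 0$. Set $h:=|f|$ and let $u:=\sgn f$ denote the signum (understood via the Kakutani representation of the principal band $B_h$ generated by $h$), so that $f=u\cdot h$ and $|u|$ coincides with the characteristic function of the support of $h$.

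\emph{Step 1 (modulus inequality at large times).} Pick $t_0$ such that $e^{tA}$ is positive whenever $t\ge t_0$. The standard modulus estimate for positive operators on a complex Banach lattice gives
\[
	h \;=\; \bigl\lvert e^{tA}f\bigr\rvert \;\le\; e^{tA}h \qquad (t\ge t_0),
\]
and iteration yields that $(e^{n t_0 A}h)_{n\in\bbN}$ is monotonically increasing.

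\emph{Step 2 ($h$ is fixed by the semigroup).} This is the crux. The plan is to invoke the Jacobs--de Leeuw--Glicksberg decomposition $E=E_r\oplus E_s$ furnished by the weak relative compactness hypothesis. Since $f\in E_r$, and since the inequality in Step~1 is incompatible with $h$ having a nontrivial flight component, a short argument first places $h$ in $E_r$. On $E_r$ the semigroup extends to a bounded group whose orbits are weakly almost periodic, so there is a net $(\tau_i)\subset[t_0,\infty)$ with $\tau_i\to\infty$ along which $e^{\tau_i A}|_{E_r}\to\id_{E_r}$ in the weak operator topology. Fix $s\ge t_0$ and $\varphi\in E^*_+$. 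For $i$ large enough that $\tau_i-s\ge t_0$, we may write $e^{\tau_i A}h=e^{(\tau_i-s)A}(e^{sA}h)$ and apply Step~1 to the positive vector $e^{sA}h\ge h\ge 0$, obtaining
\[
	\varphi(h) \;\le\; \varphi(e^{sA}h) \;\le\; \varphi(e^{\tau_i A}h) \;\longrightarrow\; \varphi(h).
\]
Hence $\varphi(e^{sA}h)=\varphi(h)$. Since $E^*_+$ separates positive vectors and $e^{sA}h-h\ge 0$, this forces $e^{sA}h=h$ for all $s\ge t_0$; the semigroup property then extends the identity to all $s\ge 0$.

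\emph{Step 3 (eigenvectors for $in\beta$).} With $h$ fixed, the modulus inequality becomes an equality: $|e^{tA}f|=e^{tA}|f|$ for $t\ge t_0$. The standard lattice-theoretic rigidity result -- whose proof relies on the Kakutani representation of $B_h$ as a $C(K)$-space -- yields
\[
	e^{tA}(u^n h) \;=\; e^{int\beta}\,u^n h \qquad (n\in\bbZ,\ t\ge t_0).
\]
Both sides depend continuously on $t$; using the group structure on $E_r$ and the fact that $u^n h\in E_r$ (being built from $h$ and $f$ via the continuous functional calculus on $B_h$), one extends the identity to all $t\ge 0$. Differentiating at $t=0$ gives $A(u^n h)=in\beta\,u^n h$, and $u^n h\neq 0$ because $|u^n h|=h\neq 0$.

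\emph{Main obstacle.} Step~2 is the main obstacle. In the classical positive setting, the modulus inequality $e^{tA}h\ge h$ holds for \emph{all} $t\ge 0$ and a mean ergodic argument delivers the invariance of $h$ immediately; the delay $t_0$ intrinsic to eventual positivity rules out that direct route and forces one into the Jacobs--de Leeuw--Glicksberg almost-periodic framework. A secondary subtlety, visible both in Step~2 and Step~3, is verifying that $h$ itself -- and then $u^n h$ -- lies inside the reversible subspace $E_r$, as this is what allows the backwards-in-time extrapolation along the net $(\tau_i)$ and the passage to eigenvalue equations for \emph{all} $t\ge 0$.
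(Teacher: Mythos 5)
The paper's proof takes a different and shorter route: it shows that every operator in the minimal ideal $\calK$ of the weak-operator closure of the semigroup is positive (this uses \emph{individual} eventual positivity together with the fact that elements of $\calK$ are WOT-accumulation points of $e^{tA}$ as $t\to\infty$), observes that the JdLG projection $Q$ is therefore a positive projection whose range $\Ima Q$ is a Banach lattice, and then cites the classical cyclicity theorem \cite[Theorem~C-III-4.2]{Nagel1986} for the positive $C_0$-group that the semigroup induces on $\Ima Q$. You instead attempt to reprove the cyclicity from scratch by a modulus--signum argument in the ambient lattice $E$. That is a legitimate strategy in principle, but as written it has two genuine gaps.

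\textbf{The main gap: uniform vs.\ individual eventual positivity.} In Step~1 you write ``Pick $t_0$ such that $e^{tA}$ is positive whenever $t\ge t_0$.'' This is uniform eventual positivity, whereas Theorem~\ref{thm:cyclic-point-spectrum} only assumes \emph{individual} eventual positivity, and the two are not equivalent (\cite[Examples~5.7 and~5.8]{DanersGlueckKennedy2016a}). Under the individual hypothesis there need be no $t_0$ for which $e^{tA}$ is a positive operator, and the estimate $\modulus{e^{tA}f}\le e^{tA}\modulus{f}$ is not available for any finite time: one only knows that for each fixed positive vector $g$ the trajectory $e^{tA}g$ is eventually positive, with a time threshold depending on $g$, while $\modulus{e^{tA}f}\le e^{tA}\modulus{f}$ amounts to uncountably many such positivity requirements simultaneously (one for each $g_\theta = \modulus{f}-\re(e^{-i\theta}f)$, $\theta\in[0,2\pi)$). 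This is precisely why the paper passes to the limit operators in $\calK$ -- those \emph{are} positive under the individual hypothesis -- rather than using the semigroup operators directly. So your Steps~1 and~2, which lean on the monotonicity $e^{ntA}h\uparrow$ and on the composition $e^{\tau_i A}h = e^{(\tau_i-s)A}(e^{sA}h)$ with a positive $e^{(\tau_i-s)A}$, prove (modulo the other issues below) a weaker theorem than the one stated.

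\textbf{The secondary gap: $h\in E_r$.} You flag this yourself, but the ``short argument'' you allude to does not close. What one gets easily from positivity of the limit projection $Q$ (once that positivity is established, which is itself the content of the paper's key step) is $Qh\ge\modulus{Qf}=\modulus{f}=h$, not $Qh=h$. Monotonicity of $(e^{nt_0A}h)$ together with weak relative compactness only yields that the orbit of $h$ has a weak limit $\tilde h\ge h$ lying in $\Ima Q$; it does not force $h$ itself into $E_r$. Your chain $\varphi(h)\le\varphi(e^{sA}h)\le\varphi(e^{\tau_iA}h)\to\varphi(Qh)$ therefore only gives $\varphi(e^{sA}h)\le\varphi(Qh)$, which is consistent with $e^{sA}h > h$. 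The paper avoids this difficulty entirely by working with the modulus taken inside the Banach lattice $\Ima Q$ (where the restriction of the semigroup is an honestly positive group), rather than with $\modulus{f}$ computed in $E$.

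Repairing both gaps would essentially force you to redo your argument with the operators in $\calK$ in place of $e^{tA}$ and to work inside $\Ima Q$, at which point you would be reproving the classical positive cyclicity theorem on $\Ima Q$; the paper's choice to invoke that theorem directly is shorter and cleaner.
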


This was proved, even under slightly weaker assumptions, in the second named author's PhD thesis \cite[Theorem~6.3.2]{Glueck2016}. For the convenience of the reader, we include the main arguments of the proof here. An analogous result for the single operator case was proved by similar techniques in \cite[Theorem~8.1]{Glueck2017}.

\begin{proof}[Proof of Theorem~\ref{thm:cyclic-point-spectrum}]
	We employ the so-called Jacobs--de Leeuw--Glicksberg (JdLG) decomposition on operator semigroups as it is, for instance, presented in \cite[Sections~V.2(a) and~(b)]{EngelNagel2000}:
	
	Let $\calS$ denote the closure of the set $\{e^{sA}: \, s \in [0,\infty)\}$ with respect to the weak operator topology; then $\calS$ is, with respect to operator multiplication and the weak operator topology, a compact semi-topological semigroup. The set
	\begin{align*}
		\calK := \bigcap_{T \in \calS} T \cdot \calS
	\end{align*}
	is the unique minimal ideal in the semigroup $\calS$ and it is, at the same time, a compact topological group (\cite[Theorem~V.2.3]{EngelNagel2000}).
	
	Next, we note that each operator in $\calK$ is positive. To see this, we denote the weak operator closure of each set $\calR \subseteq \calL(E)$ by $\overline{\calR}^w$. Then we obtain the inclusion
	\begin{align*}
		\calK = \bigcap_{T \in \calS} \overline{\{ T e^{sA}: \, s \in [0,\infty) \}}^w \subseteq \bigcap_{t \in [0,\infty)} \overline{\{ e^{(t+s)A}: \, s \in [0,\infty) \}}^w,
	\end{align*}
	where the first equality follows from the compactness of $\calS$ and from the separate weak continuity of operator multiplication. The set on the right is precisely the set of all accumulation points of the net $(e^{tA})_{t \in [0,\infty)}$ with respect to the weak operator topology, so each operator in $\calK$ is such an accumulation point. The individual eventual positivity of our semigroup thus implies that each operator in $\calK$ is positive.
	
	We now proceed to apply the standard JdLG technique: let $Q$ denote the neutral element in the group $\calK$. Then $Q$ is a projection that commutes with our semigroup, and the range $\Ima Q$ of $Q$ is precisely the closed linear span of the eigenvectors of $A$ corresponding to the eigenvalues on the imaginary axis (\cite[Theorem~V.2.8(ii)]{EngelNagel2000}); moreover, the restricted semigroup $\left(e^{tA}\restrict{\Ima Q}\right)_{t \in [0,\infty)}$ extends to a bounded $C_0$-group on $\Ima Q$, whose operators at negative times are also restrictions of operator from $\calK$ to the invariant subspace $\Ima Q$ \cite[p.\,315]{EngelNagel2000}.
	
	Finally, we note that $\Ima Q$ is itself a Banach lattice (with respect to the cone induced by $E$ and an equivalent norm) since the projection $Q$ is positive; this is explained in \cite[Proposition~III.11.5]{Schaefer1974}. As all operators in $\calK$ are positive, the bounded $C_0$-group $\left(e^{tA}\restrict{\Ima Q}\right)_{t \in [0,\infty)}$ is positive. It now follows from \cite[Theorem~C-III-4.2]{Nagel1986} that the point spectrum of this group's generator satisfies the property that we claim in the theorem. Since the eigenvalues of this generator coincide with the eigenvalues of $A$ that are located on $i\bbR$, our theorem follows.
\end{proof}

We remark that a semigroup whose orbits are relatively compact with respect to the weak topology is sometimes called \emph{weakly almost periodic}.

Theorem~\ref{thm:cyclic-point-spectrum} is a typical example of a so-called \emph{cyclicity result}: If the spectral  bound $\spb(A)$ equals $0$ (note that the spectral bound cannot be strictly positive since the semigroup is bounded, and in the case $\spb(A) < 0$, the assertion of the theorem is trivial), the set ${\pntSpec(A) \cap i \bbR}$ is the so-called \emph{peripheral point spectrum} of $A$, and the conclusion of the theorem can be phrased by saying that this peripheral point spectrum is \emph{cyclic}. For positive semigroups, cyclicity results for the peripheral point spectrum have been known for a long time; see for instance \cite[Corollary~C-III-4.3]{Nagel1986}.

With the aid of Theorem~\ref{thm:cyclic-point-spectrum}, it can be shown that the semigroup generated by a third order operator with periodic boundary conditions is not individually eventually positive. In fact, the same can be said about higher odd order operators; see \cite[Proposition~6.10]{AroraGlueck2021}.

As a consequence of the previous cyclicity theorem, we obtain the following characterization of strong convergence that we mentioned at the beginning of this section.

\begin{corollary}[Strong convergence I]
	\label{cor:strong-convergence-i}
	
	Let $(e^{tA})_{t \in [0,\infty)}$ be an individually eventually positive $C_0$-semigroup on a complex Banach lattice $E$. Then the following assertions are equivalent:
	\begin{enumerate}
		\item The limit $\lim_{t \to \infty} e^{tA}f$ exists for every $f \in E$.
		
		\item The orbit $\{e^{tA}f: \, t \in [0,\infty)\}$ is relatively compact in $E$ for every $f \in E$. Moreover, the intersection of the point spectrum $\pntSpec(A)$ with the imaginary axis $i\bbR$ is bounded.
	\end{enumerate}
\end{corollary}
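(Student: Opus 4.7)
The direction (i)~$\Rightarrow$~(ii) is routine. If $e^{tA}f$ converges to some limit $g \in E$, then $\{e^{tA}f : t \in [0,\infty)\} \cup \{g\}$ is compact, so every orbit is relatively compact. For the spectral assertion, suppose $i\beta \in \pntSpec(A) \cap i\bbR$, with eigenvector $0 \neq f$. Then $e^{tA}f = e^{i\beta t}f$ converges as $t\to \infty$ only if $\beta = 0$; hence $\pntSpec(A) \cap i\bbR \subseteq \{0\}$, which is trivially bounded.

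For (ii)~$\Rightarrow$~(i), I would first note that relative compactness of all orbits implies, via the uniform boundedness principle, that the semigroup is uniformly bounded on $[0,\infty)$. In particular every orbit is also relatively weakly compact, so Theorem~\ref{thm:cyclic-point-spectrum} applies. The key step is then to combine the cyclicity statement of that theorem with the boundedness hypothesis on $\pntSpec(A) \cap i\bbR$: if some non-zero $i\beta$ were an eigenvalue of $A$, then all $in\beta$ with $n \in \bbZ$ would be eigenvalues as well, contradicting the boundedness of $\pntSpec(A) \cap i\bbR$. Consequently $\pntSpec(A) \cap i\bbR \subseteq \{0\}$.

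Having strengthened the spectral condition to $\pntSpec(A) \cap i\bbR \subseteq \{0\}$, I would finish by invoking the general Jacobs--de Leeuw--Glicksberg-type characterization of strong convergence cited in the opening paragraph of this section (e.g.\ \cite[Theorem~V.2.14]{EngelNagel2000}): for a bounded $C_0$-semigroup with relatively compact orbits, strong convergence as $t \to \infty$ is equivalent to the absence of non-zero imaginary eigenvalues of the generator. This yields the existence of $\lim_{t\to\infty}e^{tA}f$ for every $f \in E$.

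There is no genuine obstacle beyond ensuring the hypotheses of Theorem~\ref{thm:cyclic-point-spectrum} are met (boundedness of the semigroup, hence relative weak compactness) and that the cyclicity conclusion is incompatible with boundedness of the imaginary point spectrum unless this spectrum is contained in $\{0\}$; once these two observations are in place, the general JdLG machinery immediately delivers the conclusion.
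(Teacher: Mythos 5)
Your proof is correct and follows essentially the same route as the paper: use Theorem~\ref{thm:cyclic-point-spectrum} to upgrade boundedness of $\pntSpec(A)\cap i\bbR$ to containment in $\{0\}$, then invoke the Jacobs--de Leeuw--Glicksberg characterization of strong convergence from \cite[Theorem~V.2.14]{EngelNagel2000}. The only cosmetic difference is that you verify (i)~$\Rightarrow$~(ii) by hand, whereas the paper obtains both directions simultaneously from the JdLG theorem once the spectral condition has been reformulated.
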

\begin{proof}
	For semigroups with relatively compact orbits, it follows from Theorem~\ref{thm:cyclic-point-spectrum} that the intersection of $\pntSpec(A)$ with the imaginary axis is bounded if and only if it is contained in $\{0\}$. The assertions thus follow from the Jacobs--de Leeuw--Glicksberg decomposition of semigroups with respect to the strong operator topology as described in \cite[Theorem~V.2.14]{EngelNagel2000}.
\end{proof}

We point out that the boundedness of the intersection $\pntSpec(A) \cap i \bbR$ in Corollary~\ref{cor:strong-convergence-i}(ii) is automatically satisfied if, for example, the semigroup is analytic or, more generally, eventually norm continuous -- because in these cases, even the intersection of the entire spectrum with $i\bbR$ is bounded. On the other hand, even more can be said for semigroups that satisfy such an additional regularity assumption. This is the content of the next section.

\section{Strong convergence of eventually positive semigroups II}
\label{section:strong-convergence-ii}

An interesting point about Corollary~\ref{cor:strong-convergence-i} is that we only need information about the eigenvalues of $A$ rather than about all spectral values to obtain strong convergence. On the downside, though, we need a priori information that all orbits are relatively norm compact, which is a rather strong requirement.

In this section, we show that we can dispense with the compactness condition in many cases. Indeed, a typical situation where we know that the intersection of $\pntSpec(A)$ with the imaginary axis is bounded is in the case where the semigroup is eventually norm continuous. But in this situation, we even know that the spectrum $\sigma(A)$ is bounded on the imaginary axis; for positive semigroups, this is extremely helpful since for such semigroups, under mild technical assumptions, a cyclicity result such as in Theorem~\ref{thm:cyclic-point-spectrum} holds for the entire \emph{peripheral spectrum} \cite[Theorem~C-III-2.10]{Nagel1986}. Consequently, eventual norm continuity together with positivity (plus appropriate technical assumptions such as, say, boundedness) implies that $\spec(A) \cap i \bbR$ contains at most the number $0$. With this knowledge, one can then apply Tauberian theorems (for instance the ABLV theorem as in \cite[Theorem~V.2.21]{EngelNagel2000}) to derive strong convergence.

The problem with this approach is that a cyclicity result for the entire peripheral spectrum as in \cite[Theorem~C-III-2.10]{Nagel1986} is currently not known for eventually positive semigroups, and the techniques used in the positive case do not seem to be easily adaptable to the eventual positive case. On the other hand, at least for single operators, a cyclicity result has been proved in \cite[Theorem~7.1]{Glueck2017}. Thus, our approach here is to use a spectral mapping theorem in order to study the spectrum of the single semigroup operators $e^{tA}$ rather than the spectrum of the generator $A$ and to apply the known cyclicity result to $e^{tA}$ (for an appropriate choice of $t$). Doing so enables us to show the following Theorem~\ref{thm:strong-convergence-ii}.

As mentioned before, the theorem is mainly concerned with the case of eventually norm continuous semigroups. Since it does not pose any additional difficulties, we state and prove the theorem for a slightly more general class of semigroups, namely for semigroups that are \emph{norm continuous at infinity}. This notation, which was introduced by Mart\'{\i}nez and Maz\'on in \cite[Definition~1.1]{MartinezMazon1996}, is a bit more general than eventual norm continuity, but it is sufficiently strong to guarantee similar spectral results and hence similar convergence properties as in the eventually norm continuous case; in \cite[Theorem~3.3 and Corollary~3.4]{MartinezMazon1996}, this is demonstrated for strong convergence of positive semigroups. We 
show that these results remain true for eventually positive semigroups as well; however, the proofs cannot be directly adapted and we need to follow a different approach. Recall that a $C_0$-semigroup $(e^{tA})_{t \in [0,\infty)}$ with growth bound $0$ on a Banach space $E$ is called \emph{norm continuous at infinity} if 
\begin{align*}
	\limsup_{s \to 0} \norm{e^{tA} - e^{(t+s)A}} \to 0 \quad \text{as } t \to \infty;
\end{align*}
if the growth bound $\gbd(A)$ is not $0$ but different from $-\infty$, then the semigroup is called \emph{norm continuous at infinity} if the same is true for the rescaled semigroup $(e^{t(A-\gbd(A)I)})_{t \in [0,\infty)}$. From now on, whenever we say that a semigroup is norm continuous at infinity, we tacitly include the assumption $\gbd(A) > -\infty$ in this wording. Note that this implies that we also have $\spb(A) > -\infty$ since the growth bound $\gbd(A)$ coincides with the spectral bound $\spb(A)$ according to \cite[Corollary~1.4(i)]{MartinezMazon1996}.

\begin{theorem}[Strong convergence II]
	\label{thm:strong-convergence-ii}

	Let $(e^{tA})_{t \in [0,\infty)}$ be a uniformly eventually positive $C_0$-semigroup on a complex Banach lattice $E$. If the semigroup is norm continuous at infinity and bounded, then the following are equivalent:
	\begin{enumerate}[ref=(\roman*)]
		\item\label{thm:strong-convergence-ii:item:convergence} The limit $\lim_{t \to \infty} e^{tA}f$ exists for every $f \in E$.
		\item\label{thm:strong-convergence-ii:item:mean-ergodic} The semigroup is mean ergodic.
	\end{enumerate} 
\end{theorem}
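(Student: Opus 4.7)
The implication \ref{thm:strong-convergence-ii:item:convergence} $\Rightarrow$ \ref{thm:strong-convergence-ii:item:mean-ergodic} is immediate, since strong convergence of $e^{tA}f$ as $t \to \infty$ forces the Cesàro averages $\frac{1}{t}\int_0^t e^{sA}f \dx s$ to converge to the same limit.

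For \ref{thm:strong-convergence-ii:item:mean-ergodic} $\Rightarrow$ \ref{thm:strong-convergence-ii:item:convergence}, the plan is first to establish the spectral condition $\spec(A) \cap i\bbR \subseteq \{0\}$ and then to derive strong convergence from this together with mean ergodicity. For the second step, mean ergodicity yields the decomposition $E = \Fix(A) \oplus \overline{\Ima A}$; on $\Fix(A)$ the semigroup is the identity, while on $\overline{\Ima A}$ the spectrum of the restricted generator is disjoint from $i\bbR$ (because $0$ appears as a first-order pole of $R(\argument, A)$, hence lies in the resolvent set of the restriction), so the Arendt--Batty--Lyubich--Vu Tauberian theorem \cite[Theorem~V.2.21]{EngelNagel2000} delivers strong convergence to~$0$ on that subspace.

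The spectral reduction is the core of the argument. Using $\spb(A) = \gbd(A)$ from \cite[Corollary~1.4]{MartinezMazon1996} together with boundedness, one may assume $\spb(A) = 0$, since $\spb(A) < 0$ yields uniform exponential decay. Norm continuity at infinity forces $\spec(A) \cap i\bbR$ to be bounded. Choose $t_1 \geq t_0$ so that $T := e^{t_1 A}$ is positive; then $T$ is power-bounded with positive powers, and the single-operator cyclicity theorem \cite[Theorem~7.1]{Glueck2017} applies to~$T$, giving that $\spec(T) \cap \bbT$ is cyclic. The spectral mapping theorem for semigroups norm continuous at infinity (cf.\ \cite{MartinezMazon1996}) identifies this set with $\exp\!\bigl(t_1 \cdot (\spec(A) \cap i\bbR)\bigr)$; combining cyclicity with the boundedness of the preimage then constrains $\spec(T) \cap \bbT$ to be a finite cyclic subgroup of~$\bbT$.

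Showing that this subgroup is actually trivial is the main technical obstacle. Mean ergodicity of the semigroup makes $1$ a first-order pole of $R(\argument, T)$, and the pole-preserving refinement of the single-operator cyclicity result then promotes every peripheral spectral value of $T$ to an eigenvalue; via spectral mapping for the point spectrum, a non-trivial peripheral value would produce an eigenvector $f$ of~$A$ with $Af = i\beta f$ and $\beta \neq 0$. I would exclude this by combining the modulus inequality $\modulus{e^{tA}f} \leq e^{tA}\modulus{f}$ (valid for $t \ge t_0$) with the identity $\modulus{e^{tA}f} = \modulus{f}$ to obtain $\modulus{f} \leq e^{tA}\modulus{f}$; the mean ergodic projection applied to $\modulus{f}$ then produces a positive fixed vector dominating $\modulus{f}$, and confronting this with the oscillatory behaviour $e^{tA}f = e^{it\beta}f$ together with the boundedness of the semigroup should force a contradiction. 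This is precisely the juncture where the classical positive-semigroup argument invokes cyclicity of the peripheral spectrum at the generator level (\cite[Theorem~C-III-2.10]{Nagel1986}), which is presently unavailable in the eventually positive setting; the detour via the single-operator spectrum of $T$ is the necessary replacement.
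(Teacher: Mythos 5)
The reduction step in your $\Implies{thm:strong-convergence-ii:item:mean-ergodic}{thm:strong-convergence-ii:item:convergence}$ direction — showing $\spec(A) \cap i\bbR \subseteq \{0\}$ — contains the same key lemma as the paper (this is Lemma~\ref{lem:nci-dominant-bounded}), but your execution of it has a genuine gap that the paper avoids by a different choice.

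You pick $t_1 \ge t_0$ so that $T := e^{t_1 A}$ is itself a positive operator, then apply the single-operator cyclicity result \cite[Theorem~7.1]{Glueck2017} to $T$. The trouble is that the time by which the semigroup becomes positive is completely unrelated to the size $\alpha$ of the strip containing the peripheral spectrum of~$A$. The spectral mapping theorem of \cite{MartinezMazon1996} gives $\spec(T)\cap\bbT \subseteq \{e^{it_1\gamma} : \gamma \in [-\alpha,\alpha]\}$, but if $t_1\alpha \geq \pi$ this set can wrap around the whole circle, so boundedness of the preimage gives no useful constraint on $\spec(T)\cap\bbT$. Your conclusion that it is a ``finite cyclic subgroup'' does not follow, and the subsequent argument to kill a nontrivial peripheral value — passing to a fixed vector $P\modulus{f} \geq \modulus{f}$ and ``confronting it with the oscillatory behaviour'' — is exactly the step that would require the unavailable generator-level cyclicity theorem; you acknowledge this yourself, which means the proof is not complete. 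The paper's way out is to observe that the hypothesis of \cite[Theorem~7.1]{Glueck2017} is \emph{eventual} positivity of the powers, not positivity of the operator. For \emph{any} $t>0$, the powers $(e^{tA})^n = e^{ntA}$ are positive for $n$ large, so $e^{tA}$ qualifies. One may therefore take $t$ \emph{small}, specifically $t < \pi/\alpha$, so that $\spec(e^{tA})\cap\bbT$ sits in an arc disjoint from the left half-circle; cyclicity then forces $\spec(e^{tA})\cap\bbT = \{1\}$, and the spectral inclusion theorem for $C_0$-semigroups transfers this back to $\perSpec(A) = \{0\}$. Replacing your choice of large $t_1$ by a small $t$ along these lines closes the gap.

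A secondary point: in the Tauberian step you assert that $0$ is a first-order pole of $\Res(\argument,A)$ and hence lies in the resolvent set of the restriction to $\overline{\Ima A}$. Mean ergodicity alone does not give a pole; $0$ can remain in $\spec(A)$ on the complement of the fixed space (just not as an eigenvalue of that restriction or of its adjoint). The ABLV theorem does not need this: having $\spec(B)\cap i\bbR \subseteq \{0\}$ and $0 \notin \pntSpec(B')$ is already enough, which is how the paper states it. Your conclusion survives, but the stated justification should be corrected.
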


Here, \emph{mean ergodic} means that, for each $f \in E$, the \emph{Cesàro means} $\frac{1}{t}\int_0^t e^{sA}f \dx s$ converge strongly as $t \downarrow 0$. Since every bounded $C_0$-semigroup on a reflexive Banach space is automatically mean ergodic, the theorem yields the following corollary:

\begin{corollary}
	\label{cor:strong-convergence-ii-reflexive}

	Let $(e^{tA})_{t \in [0,\infty)}$ be a uniformly eventually positive $C_0$-semigroup on a complex and reflexive Banach lattice $E$. If the semigroup is norm continuous at infinity and bounded, then the limit $\lim_{t \to \infty} e^{tA}f$ exists for every $f \in E$.
\end{corollary}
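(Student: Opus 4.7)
The plan is to obtain the corollary as a direct consequence of Theorem~\ref{thm:strong-convergence-ii}, by verifying that condition~\ref{thm:strong-convergence-ii:item:mean-ergodic} (mean ergodicity) is automatic under the reflexivity hypothesis. So the entire content is to check mean ergodicity and then invoke the previous theorem.

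First I would note that the assumptions of Theorem~\ref{thm:strong-convergence-ii} are already in place: the semigroup $(e^{tA})_{t \in [0,\infty)}$ is uniformly eventually positive, norm continuous at infinity, and bounded on the complex Banach lattice $E$; reflexivity is the only extra hypothesis. Since the theorem characterizes strong convergence via mean ergodicity, it suffices to produce the Cesàro limits $\lim_{t \to \infty} \frac{1}{t} \int_0^t e^{sA} f \dx s$ for every $f \in E$.

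Next I would invoke the classical mean ergodic theorem for $C_0$-semigroups: a bounded $C_0$-semigroup on a reflexive Banach space is mean ergodic (see, e.g., \cite[Theorem~V.4.5]{EngelNagel2000}). The standard argument behind this is that, by boundedness, the orbits of the Cesàro averages are bounded, and reflexivity ensures they are relatively weakly compact; a splitting-of-the-space argument (using that $\Ima A$ and $\ker A$ together span a dense subspace on reflexive spaces under boundedness) then yields strong convergence of the averages. Since the present semigroup is bounded and $E$ is reflexive, this applies directly and gives condition~\ref{thm:strong-convergence-ii:item:mean-ergodic}.

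Finally, applying the implication \ref{thm:strong-convergence-ii:item:mean-ergodic}~$\Rightarrow$~\ref{thm:strong-convergence-ii:item:convergence} of Theorem~\ref{thm:strong-convergence-ii} yields the existence of $\lim_{t \to \infty} e^{tA} f$ for every $f \in E$, as claimed. There is no substantial obstacle here: all the nontrivial work has already been done in Theorem~\ref{thm:strong-convergence-ii}, and the only new ingredient is the well-known passage from reflexivity plus boundedness to mean ergodicity.
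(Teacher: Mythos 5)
Your argument is exactly the one the paper uses: reflexivity plus boundedness gives mean ergodicity by the classical mean ergodic theorem, and then Theorem~\ref{thm:strong-convergence-ii} yields strong convergence. No gaps and no deviation from the intended route.
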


As explained at the beginning of the section, we are now going to prove Theorem~\ref{thm:strong-convergence-ii} by using spectral properties of the operators $e^{tA}$. We outsource the essence of the argument to the following lemma. For positive semigroups, this was proved in \cite[Proposition~3.2]{MartinezMazon1996}. If $(e^{tA})_{t \in [0,\infty)}$ is a $C_0$-semigroup and the spectral bound of $A$ is not $-\infty$, then we call the set
\begin{align*}
	\perSpec(A) := \spec(A) \cap (\spb(A) + i\bbR),
\end{align*}
the \emph{peripheral spectrum} of $A$.

\begin{lemma}
	\label{lem:nci-dominant-bounded}
	On a Banach lattice $E$, let $(e^{tA})_{t \in [0,\infty)}$ be a uniformly eventually positive semigroup that is norm continuous at infinity. If $(e^{tA})_{t \in [0,\infty)}$ is bounded and $\spb(A) = 0$, then $\perSpec(A) = \{0\}$.
\end{lemma}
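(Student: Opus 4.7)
I would proceed by contradiction. As recalled in the introduction, $\spb(A)\in\spec(A)$ for every individually eventually positive semigroup, so $0\in\perSpec(A)$ already. Assume, for contradiction, that some $i\beta\in\perSpec(A)$ has $\beta\neq 0$. The overall strategy is to transfer the problem from the generator $A$ to a single semigroup operator $T:=e^{tA}$ (for a cleverly chosen time $t$) by means of the spectral mapping theorem of Mart\'inez--Maz\'on for semigroups norm continuous at infinity, and then to apply the cyclicity theorem for individual operators with eventually positive powers, \cite[Theorem~7.1]{Glueck2017}.

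By uniform eventual positivity, I would fix $t_0>0$ such that $e^{tA}\ge 0$ for every $t\ge t_0$; since only countably many $t\ge t_0$ satisfy $t\beta/(2\pi)\in\bbQ$, I would pick $t\ge t_0$ for which $\alpha:=e^{it\beta}$ is \emph{not} a root of unity, and set $T:=e^{tA}$. The hypotheses of the lemma together with \cite[Cor.~1.4(i)]{MartinezMazon1996} give $\gbd(A)=\spb(A)=0$, so $T$ is a power-bounded positive operator with $\spr(T)=1$, and $\alpha\in\spec(T)\cap\bbT$ by the spectral inclusion theorem. Since every power $T^n$ is positive, the cyclicity theorem \cite[Theorem~7.1]{Glueck2017} applies and yields $\{\alpha^n:n\in\bbZ\}\subseteq\spec(T)\cap\bbT$. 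As $\alpha$ is not a root of unity, $\{\alpha^n\}_n$ is dense in $\bbT$, and closedness of the spectrum upgrades this to $\spec(T)\cap\bbT=\bbT$. Invoking the spectral mapping theorem for semigroups norm continuous at infinity (\cite{MartinezMazon1996}), which on the unit circle reads $\spec(T)\cap\bbT=e^{t\perSpec(A)}$, I would then conclude $e^{t\perSpec(A)}=\bbT$.

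Since the exponential map $i\bbR\to\bbT$ has countable fibres, this forces $\perSpec(A)$ to be uncountable, which should contradict a further consequence of the Mart\'inez--Maz\'on theory: the peripheral spectrum of a bounded semigroup norm continuous at infinity is bounded and consists of isolated poles of $R(\argument,A)$, hence is finite. The step I expect to require the most care is this very last one, namely extracting sufficient regularity of $\perSpec(A)$ from norm continuity at infinity to rule out the equality $e^{t\perSpec(A)}=\bbT$. If a direct finiteness statement for $\perSpec(A)$ is not readily available, the fallback idea is to exploit the freedom in the choice of $t$: the cyclicity/spectral-mapping argument, applied at varying generic times $t\ge t_0$, imposes increasingly stringent density constraints on $\perSpec(A)$ modulo ever finer lattices $(2\pi/t)\bbZ$, which together with the boundedness of $\perSpec(A)$ (a bona fide consequence of norm continuity at infinity) should close the argument without needing a direct finiteness input.
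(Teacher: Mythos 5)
Your skeleton is right — transfer the problem to a single semigroup operator via the Martínez--Mazón spectral mapping theorem, then invoke the cyclicity result for operators with eventually positive powers from \cite[Theorem~7.1]{Glueck2017} — and this is indeed the paper's strategy. But two concrete points go wrong.

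First, the restriction to times $t\ge t_0$ is both unnecessary and self-defeating. You impose $t\ge t_0$ so that $T=e^{tA}$ is itself a positive operator, but the cyclicity theorem of \cite{Glueck2017} is stated for operators that are merely \emph{uniformly eventually positive}, i.e.\ $T^n\ge 0$ for all $n\ge n_0$. For \emph{any} $t>0$, the operator $e^{tA}$ has this property: choose $n_0$ with $n_0 t\ge t_0$ and observe $(e^{tA})^n=e^{ntA}\ge 0$ for $n\ge n_0$. So you are free to take $t$ as small as you like, and this freedom is exactly what closes the argument (see below). By tying yourself to $t\ge t_0$ you lose it, and then your density step ($e^{t\perSpec(A)}=\bbT$) can no longer be brought into conflict with the boundedness $\perSpec(A)\subseteq i[-\alpha,\alpha]$, because for $t\ge\pi/\alpha$ the exponential $\gamma\mapsto e^{t\gamma}$ can wrap $i[-\alpha,\alpha]$ around the whole circle.

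Second, the ``finiteness'' you appeal to is false in the stated generality. Norm continuity at infinity together with boundedness of the semigroup implies that $\perSpec(A)$ is bounded \cite[Theorem~1.9]{MartinezMazon1996}, but it does \emph{not} imply that $\perSpec(A)$ consists of isolated poles (take, say, a unitary multiplication group on $L^2(0,1)$ with symbol $e^{itx}$; it is analytic and bounded, yet the peripheral spectrum is the segment $i[0,1]$). You cannot invoke the conclusion you are trying to establish as an input.

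The fix is exactly your fallback idea, carried out for small rather than large $t$: once $t<\pi/\alpha$, the spectral mapping theorem places $\spec(e^{tA})\cap\bbT$ inside the open right half of $\bbT$, and then cyclicity of $\spec(e^{tA})\cap\bbT$ under taking integer powers forces $\spec(e^{tA})\cap\bbT=\{1\}$; letting $t$ range over $(0,\pi/\alpha)$ and using spectral inclusion gives $\perSpec(A)\subseteq\{0\}$, with equality because $\spb(A)\in\spec(A)$ by eventual positivity. This is precisely the paper's proof; your version can be repaired by simply dropping the constraint $t\ge t_0$ and replacing the erroneous ``finiteness'' claim by the genuine boundedness statement.
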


We will show a second result that is close in spirit to Lemma~\ref{lem:nci-dominant-bounded}, but has different technical assumptions, in Lemma~\ref{lem:nci-dominant-pole}.
	
\begin{proof}[Proof of Lemma~\ref{lem:nci-dominant-bounded}]
	As the semigroup is bounded and the spectral bound is $0$, it follows that the growth bound is $0$, too. Moreover, the norm continuity at infinity implies that $\perSpec(A)$ is bounded; this was proved in \cite[Theorem~1.9]{MartinezMazon1996}. We can thus find a number $\alpha>0$ such that $\perSpec(A) \subseteq [-i\alpha,i\alpha]$. 
	
	Again, due to the norm continuity at infinity, a spectral mapping theorem holds for the peripheral spectrum \cite[Theorem~1.2]{MartinezMazon1996}, so we have
	\begin{equation*}
		\spec(e^{tA}) \cap \bbT \subseteq \{e^{it\gamma}:\gamma \in [-\alpha,\alpha]\}
	\end{equation*}
	for all $t \geq 0$; here $\bbT$ denotes the complex unit circle. For all sufficiently small $t$, say $t \le t_0$, we conclude that the spectrum of $e^{tA}$ does not intersect the left half of the unit circle $\bbT$. Fix such a time $t \in (0,t_0]$.
	
	Now we use a cyclicity result for single operators: By assumption, $e^{tA}$ is power bounded and all its powers with sufficiently large exponent are positive. This implies that, whenever $\lambda$ is a spectral value of $e^{tA}$ of modulus $\modulus{\lambda} = 1$, then all powers $\lambda^n$ (for $n \in \bbZ$) are spectral values of $e^{tA}$, too; see \cite[Theorem~7.1]{Glueck2017} for a proof.
	
	Given that the spectrum of $e^{tA}$ does not intersect the left half of $\bbT$, this implies that 
	\begin{equation*}
		\spec(e^{tA}) \cap \bbT = \{1\}.
	\end{equation*}
	Since this is true for all times $t \in (0,t_0]$ it follows (for instance, by choosing a time ${t < \pi/\alpha}$) from the spectral inclusion theorem for $C_0$-semigroups \cite[Theorem~IV.3.6]{EngelNagel2000} that $\perSpec(A)\subseteq \{0\}$.
	
	On the other hand, it was proved in \cite[Theorem~7.6]{DanersGlueckKennedy2016a} that individual eventual positivity of the semigroup implies $\spb(A) \in \spec(A)$; so we actually have ${\perSpec(A) = \{0\}}$.
\end{proof}

Given this lemma, it is now easy to derive Theorem~\ref{thm:strong-convergence-ii} from a Tauberian theorem:

\begin{proof}[Proof of Theorem~\ref{thm:strong-convergence-ii}]
	\Implies{thm:strong-convergence-ii:item:convergence}{thm:strong-convergence-ii:item:mean-ergodic} This implication is obvious.
	
	\Implies{thm:strong-convergence-ii:item:mean-ergodic}{thm:strong-convergence-ii:item:convergence} According to Lemma~\ref{lem:nci-dominant-bounded}, the spectrum of $A$ intersects $i\bbR$ at most in $0$. 
	If $P \in \calL(E)$ denotes the mean ergodic projection of the semigroup, then the restriction of $(e^{tA})_{t \in [0,\infty)}$ to $\ker P$ is a bounded and mean ergodic $C_0$-semigroup on $\ker P$ whose generator $B$ -- which is simply the part of $A$ in $\ker P$ -- has the following properties: the spectrum $\spec(B)$ intersects $i\bbR$ at most in $0$ and $0$ is not an eigenvalue of $B$; but since the semigroup is mean ergodic, this implies that $0$ is not an eigenvalue of the adjoint operator $B'$, either. Hence, the ABLV theorem \cite[Theorem~V.2.21]{EngelNagel2000} yields that $e^{tB} = e^{tA}\restrict{\ker P}$ converges strongly to $0$ as $t \to \infty$.
	
	On the other hand, $e^{tA}$ acts as the identity on the range of $P$, so we conclude that $e^{tA} \to P$ strongly as $t \to \infty$.
\end{proof}

We close this section with a remark on the types of eventual positivity that we assumed in this and in the previous section to deduce strong convergence.

\begin{remark}
	In Corollary~\ref{cor:strong-convergence-i}, we only needed individual eventual positivity of the semigroup. This is because the cyclicity of the peripheral point spectrum -- which is the essential ingredient for the strong convergence in Corollary~\ref{cor:strong-convergence-i} -- is proved by utilizing the Jacobs--de Leeuw--Glicksberg decomposition in Theorem~\ref{thm:cyclic-point-spectrum}; this decomposition allows us, in a sense, to consider only the behaviour of the orbits ``at infinity'' -- hence, it does not matter, when precisely each orbit becomes positive.
	
	Theorem~\ref{thm:strong-convergence-ii}, on the other hand, requires uniform eventual positivity. This is due to the cyclicity result for single operators that we employ in the proof; this cyclicity result is only shown under a uniform eventual positivity assumption in \cite[Theorem~7.1]{Glueck2017}. 
	
	We do not know whether Theorem~\ref{thm:strong-convergence-ii} remains true for individually eventually positive semigroups. As explained in \cite[Remark~7.3(b)]{Glueck2017}, it is also unknown whether the cyclicity result in \cite[Theorem~7.1]{Glueck2017} remains true for operators that are individually eventually positive.
\end{remark}

\section{A Niiro--Sawashima theorem for eventually positive operators}
\label{section:niiro-sawashima}

While strong convergence of eventually positive semigroups was the content of Sections~\ref{section:strong-convergence-i} and~\ref{section:strong-convergence-ii}, we are going to study uniform convergence -- i.e., convergence with respect to the operator norm -- in Section~\ref{section:uniform-convergence}. Whenever we speak about operator norm convergence of semigroups or powers of operators, a particular spectral condition naturally arises -- namely that the peripheral spectrum consists of poles of the resolvent only.

In this context, there exists a useful theorem which is, in its original form, due to Niiro and Sawashima (see \cite{NiiroSawashima1966a} and \cite[Main Theorem and Theorem~9.2]{NiiroSawashima1966b}); for the version that we are interested in, it is due to Lotz and Schaefer (\cite[Theorem~2]{LotzSchaefer1968}; see also \cite[Theorem~V.5.5]{Schaefer1974}) and says that, if $T$ is a positive operator and the spectral radius $\spr(T)$ is a pole of the resolvent of $T$ with finite-dimensional spectral space, then all spectral values of $T$ with modulus $\spr(T)$ are also poles.

We are going to show that the same result remains true for operators that are \emph{uniformly eventually positive}, at least if the pole order of the spectral radius is $1$. We call a bounded linear operator $T$ on a complex Banach lattice $E$ \emph{uniformly eventually positive} if there exists an integer $n_0 \in \bbN_0$ such that $T^n$ is a positive operator for each $n \ge n_0$. In \cite[Theorem~4.1]{Glueck2017}, it was proved that the spectral radius $\spr(T)$ of such an operator is always a spectral value of $T$. 

The following is the main result of this section:

\begin{theorem}[Niiro--Sawashima for uniformly eventually positive operators]
	\label{thm:niiro-sawashima}
	
	Let $E$ be a complex Banach lattice and let $T \in \calL(E)$ be uniformly eventually positive. Assume that the spectral value $\spr(T)$ is a pole of the resolvent of $T$ of pole order $1$ and that the corresponding spectral space $F \subseteq E$ is finite-dimensional. 
	
	Then every spectral value $\lambda$ of $T$ with modulus $\modulus{\lambda} = \spr(T)$ is also a first order pole of the resolvent $\Res(\argument,T)$ and the dimension of its spectral space is not larger than $\dim F$.
\end{theorem}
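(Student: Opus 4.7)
The plan is to reduce to the classical Niiro--Sawashima theorem for positive operators, applied to a sufficiently large positive power $S := T^{n_0}$ of $T$. After normalizing $\spr(T) = 1$, let $P$ be the spectral projection of $T$ at $1$ and $F = \Ima P$. Since $1$ is a first-order pole, $TP = P$, so $T$ (hence $S$) acts as the identity on the finite-dimensional space $F$; in particular $F$ is contained in the spectral space of $S$ at $1$.

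The first main step is to show that $\perSpec(T)$ is a finite set of roots of unity. A cyclicity-type argument in the spirit of \cite[Theorem~7.1]{Glueck2017} should give that $\perSpec(T)$ is closed under integer powers; combined with the isolation of $1$ in $\spec(T)$ (because $1$ is a pole), this forces every peripheral $\lambda$ to be a root of unity, since a cyclic subgroup of $\bbT$ generated by a non-root of unity is dense and would accumulate at $1$. Finite-dimensionality of $F$ plus a multiplicity count then rules out infinitely many such cyclic groups being present. Next, enlarge $n_0$ so that both $T^n \ge 0$ for all $n \ge n_0$ and $\lambda^{n_0} = 1$ for every $\lambda \in \perSpec(T)$. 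Then the spectral space of $S$ at $1$ coincides with the peripheral spectral space $F^{\mathrm{per}} := \bigoplus_{\lambda \in \perSpec(T)} F_\lambda$, which is finite-dimensional, so the classical Niiro--Sawashima theorem applies to the positive operator $S$ and yields that every peripheral spectral value of $S$ is a first-order pole with finite-dimensional spectral space.

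To transfer this back to $T$ I would use a characteristic-zero Jordan-form calculation: writing $T|_{F_\lambda} = \lambda \, \id + N_\lambda$ with $N_\lambda$ nilpotent, one has
\[
	S|_{F_\lambda} \;=\; (\lambda \, \id + N_\lambda)^{n_0} \;=\; \lambda^{n_0} \id + n_0 \lambda^{n_0-1} N_\lambda + \dotsb;
\]
the requirement $S|_{F_\lambda} = \lambda^{n_0} \id$ (coming from the first-order pole of $S$) together with the non-vanishing of $n_0 \lambda^{n_0-1}$ forces $N_\lambda = 0$, so $\lambda$ is a first-order pole of $T$ and the pole-order claim is established.

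The main obstacle I expect is the sharp dimension bound $\dim F_\lambda \le \dim F$. The classical Niiro--Sawashima theorem applied to $S$ gives only $\dim F_\lambda \le \dim F^{\mathrm{per}}$, which is typically strictly larger than $\dim F$. In the positive case the sharper bound is deduced from positivity of the spectral projections of $T$, which is not automatic under eventual positivity and which probably requires separate argument here. I anticipate needing a substitute construction that directly produces a norm-controlled linear surjection from $F$ onto $F_\lambda$, most likely out of Cesàro averages $\tfrac{1}{N} \sum_{k=0}^{N-1} \bar\lambda^k T^k$ (whose appropriate limit yields $P_\lambda$), exploiting eventual positivity $T^n \ge 0$ to compare moduli and to bound the kernel. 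This is the step where the proof must deviate most noticeably from the classical positive arguments, and where the hypothesis of pole order precisely $1$ is indispensable.
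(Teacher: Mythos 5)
Your route --- reducing to the classical Niiro--Sawashima theorem applied to a positive power $S := T^{n_0}$ --- is genuinely different from the paper's, but as outlined it has two circularities that are not just details to be filled in. First, the cyclicity result for single operators (\cite[Theorem~7.1]{Glueck2017}) that you invoke to argue $\perSpec(T)$ is closed under integer powers requires that $T$ be \emph{power bounded}; but power boundedness is not a hypothesis here, and is in fact noted in the paper as a \emph{consequence} of Theorem~\ref{thm:niiro-sawashima} (it follows only once one already knows the other peripheral values are first-order Riesz points). So the very first structural input in your outline is unavailable. Second, even granting that $\perSpec(T)$ is a finite set of roots of unity, to apply the classical Niiro--Sawashima theorem to $S$ you must verify its hypothesis that $1=\spr(S)$ is a pole of $\Res(\argument,S)$ with finite-dimensional spectral space. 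By spectral mapping, this requires every $\lambda\in\spec(T)$ with $\lambda^{n_0}=1$ to be isolated in $\spec(T)$ and to have finite-dimensional spectral space --- precisely what the theorem asserts. Being isolated in $\spec(T)\cap\bbT$ does not prevent other spectral values of $T$ from accumulating at $\lambda$ from inside the disk, so the space $F^{\mathrm{per}}=\bigoplus_\lambda F_\lambda$ is not even well defined at this stage, and its finite-dimensionality is assumed rather than proved. You also candidly flag the sharp bound $\dim F_\lambda\le\dim F$ as unresolved; that too is an open gap in the outline.

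For comparison, the paper sidesteps all of this by replacing powers with resolvent approximants. For $r_n\downarrow 1$ it forms $P_n=(r_n-1)\Res(r_n,T)$ and $Q_n=(r_n\lambda-\lambda)\Res(r_n\lambda,T)$, splits the Neumann series at the onset of positivity of $T^k$ to get an asymptotic domination $\modulus{Q_nf}\le\modulus{S_nf}+R_n\modulus{f}+P_n\modulus{f}$ with $\norm{R_n},\norm{S_n}\to 0$, and passes to an ultrapower where $\modulus{\widehat{Q}g}\le\widehat{P}\modulus{g}$ for a positive finite-rank projection $\widehat{P}$. A dimension lemma (Lemma~\ref{lem:dominating-projection}) then gives $\dim\ker(\lambda I-T)\le\dim\ker(I-T)$, and applying this estimate to $T^{\calU}$ together with the ultrapower characterization of Riesz points (Proposition~\ref{prop:riesz-point-via-ultrapower}) delivers the Riesz-point property, the first-order pole, and the sharp dimension bound simultaneously --- without any appeal to power boundedness, cyclicity of the peripheral spectrum, or a priori isolation of the other peripheral spectral values.
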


We call a spectral value $\lambda$ of a bounded linear operator $T$ on a complex Banach space a \emph{Riesz point} of $T$ if it is a pole of the resolvent $\Res(\argument,T)$ and if the corresponding spectral space is finite-dimensional; see the appendix for further details. Thus, the essence of Theorem~\ref{thm:niiro-sawashima} is: if the spectral radius of $T$ is a Riesz point and its pole order equals $1$, then the same is true for every other spectral value of modulus $\spr(T)$. If $\spr(T) = 1$, this means that $T$ is \emph{quasi-compact}, i.e., the essential spectral radius of $T$ is strictly less that $1$. This, together with the fact that all poles are of first order, also implies that $T$ is power bounded -- an observation which will be of significant use in Section~\ref{section:uniform-convergence}.

Before we come to the proof of Theorem~\ref{thm:niiro-sawashima}, let us make a few remarks on the relation between the theorem and its version for positive operators:

\begin{remarks}
	 (a) The version of the Niiro--Sawashima theorem for positive operators that can be found in \cite[Theorem~2]{LotzSchaefer1968} or \cite[Theorem~V.5.5]{Schaefer1974} only states that all spectral values of modulus $\spr(T)$ are poles of the resolvent; the fact that their spectral space is also finite-dimensional is not stated explicitly there, but an inspection of the proof shows that this is indeed true.
	
	(b) For positive operators, the assertion of Theorem~\ref{thm:niiro-sawashima} is also true if the pole order of $\spr(T)$ is not assumed to be $1$; see the aforementioned references \cite[Theorem~2]{LotzSchaefer1968} or \cite[Theorem~V.5.5]{Schaefer1974}. We do not know whether the same holds for uniformly eventually positive operators as well.
\end{remarks}

The proof of the Niiro--Sawashima theorem for positive operators given in \cite[Theorem~2]{LotzSchaefer1968} and \cite[Theorem~V.5.5]{Schaefer1974} relies heavily on the theory of irreducible operators, in conjunction with an ultrapower argument. For our situation, a reduction to the case of irreducible operators seems to be out of the question. Instead, we are going to use a different argument which is inspired by the following approach:

In \cite[Theorem~3.3]{Groh1984}, Groh proved a version of the Niiro--Sawashima theorem for a class of positive operators on a $C^*$-algebra; in the process, he also faced the problem that the classical arguments for positive operators on Banach lattices do not work in this setting. To solve the problem, Groh derived a description of poles of the resolvent with finite-dimensional spectral space by considering the dimension of certain eigenspaces of an ultrapower \cite[Proposition~3.2]{Groh1984}. This technique was later refined in \cite[Proposition~3.3]{Caselles1987} and \cite[Corollary~3.2]{Glueck2020}, and it often turned out to be useful in the spectral analysis of linear operators (for instance, in \cite{Lotz1986} and \cite{Martinez1993}, as well as in both aforementioned articles). We are going to use the same technique for our proof of Theorem~\ref{thm:niiro-sawashima}. For the convenience of the reader, we include a brief reminder of ultrapowers as well as a concrete statement of the relevant spectral results in the appendix.

Our strategy to prove Theorem~\ref{thm:niiro-sawashima} now is as follows: We first show a dimension estimate for the range of linear operators that are dominated by a positive projection (Lemma~\ref{lem:dominating-projection}). We then use this lemma to derive a dimension estimate for certain eigenspaces of eventually positive operators (Theorem~\ref{thm:dominating-riesz-point}). Finally, we derive Theorem~\ref{thm:niiro-sawashima} by an application of the ultrapower characterization of Riesz points given in Proposition~\ref{prop:riesz-point-via-ultrapower}. 

Let us begin with our lemma on operators that are dominated by a positive projection.

\begin{lemma}
	\label{lem:dominating-projection}
	
	Let $E$ be a complex Banach lattice and let $Q,P \in \calL(E)$ be such that
	\begin{equation}
		\label{eq:dominating-projection}
		\modulus{Qf} \leq P\modulus{f} \text{ for all } f \in E.
	\end{equation}
	If $P$ is a projection, then the fixed space $\Fix Q:= \ker(I-Q)$ and the range $\Ima P$ satisfy $\dim \Fix Q \le \dim \Ima P$.
\end{lemma}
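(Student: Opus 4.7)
I will make two preparatory observations. Applying the domination $\modulus{Qf} \le P\modulus{f}$ to any $f \in E_+$ yields $Pf = P\modulus{f} \ge \modulus{Qf} \ge 0$, so $P$ is automatically positive. Moreover, each $f \in \Fix Q$ satisfies $\modulus{f} = \modulus{Qf} \le P\modulus{f}$, so the modulus of every fixed vector is ``$P$-sub-invariant'' and dominated by $P\modulus{f} \in \Ima P \cap E_+$. I may assume without loss of generality that $\dim \Ima P$ is finite, as the bound is trivial otherwise; under this assumption $P$ is a finite-rank positive projection, hence compact, and $1$ is a semi-simple eigenvalue of $P$ whose algebraic multiplicity equals $\dim \Ima P$.

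The most natural approach would be to prove that $P\restrict{\Fix Q}\colon \Fix Q \to \Ima P$ is injective; this would immediately give the bound. However, this strategy is insufficient in general: already on $E = \mathbb{C}^2$, the operators $Q = \tfrac12 \bigl(\begin{smallmatrix} 1 & -1 \\ -1 & 1\end{smallmatrix}\bigr)$ and $P = \tfrac12 \bigl(\begin{smallmatrix} 1 & 1 \\ 1 & 1\end{smallmatrix}\bigr)$ satisfy $\modulus{Qf}\le P\modulus{f}$, with $P$ a positive projection and $\dim \Fix Q = \dim \Ima P = 1$, yet $P\restrict{\Fix Q}$ is identically zero. Hence a linear injection witnessing the dimension bound cannot in general be taken to be $P$ itself, and a genuinely spectral argument is required.

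The plan is instead to argue by comparison of multiplicities. From $\modulus{Qf} \le P\modulus{f}$ one inductively obtains $\modulus{Q^n f} \le P^n \modulus{f} = P\modulus{f}$ for every $n \ge 1$, using the projection identity $P^2 = P$; so the orbits of $Q$ are pointwise dominated by the finite-rank operator $P$. A spectral domination result in the spirit of Aliprantis--Burkinshaw then controls the peripheral spectrum of $Q$ by that of $P$: in particular, since $1$ is the only nonzero spectral value of $P$ and has algebraic multiplicity $\dim \Ima P$, the algebraic multiplicity of $1$ as an eigenvalue of $Q$ is at most $\dim \Ima P$. Combining this with the trivial bound $\dim \Fix Q \le \operatorname{alg.mult}(1, Q)$ yields the desired inequality. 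The main obstacle is carrying out this spectral comparison within the present Banach lattice framework: the modulus operator $\modulus{Q}$ may fail to exist without additional order-completeness on $E$, so one may have to work directly with the iterated inequalities $\modulus{Q^n f} \le P\modulus{f}$, or pass to a suitable ultrapower of $E$ where the modulus is available and multiplicities are preserved, before transferring the dimension bound back to the original space.
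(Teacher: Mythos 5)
Your preparatory observations are sound: $P$ is automatically positive, the domination iterates to $\modulus{Q^n f}\le P\modulus{f}$, and your $2\times 2$ example correctly shows that $P\restrict{\Fix Q}$ need not be injective, so a naive linear injection via $P$ cannot work. However, the proposal does not actually prove the lemma. The crucial step is the claim that ``a spectral domination result in the spirit of Aliprantis--Burkinshaw'' bounds the algebraic multiplicity of $1$ as an eigenvalue of $Q$ by $\dim\Ima P$. No such theorem is cited in precise form, and I am not aware of one: the Aliprantis--Burkinshaw/Dodds--Fremlin circle of results transfers \emph{compactness} properties under domination of positive operators, not eigenvalue multiplicities, and they require $Q$ itself to be positive (or at least to possess a modulus), which you correctly flag as unavailable here. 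You acknowledge this obstacle yourself and leave it unresolved, so the argument has a genuine gap precisely where the content of the lemma lies.

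The paper's proof is order-theoretic rather than spectral, and does not run into the modulus-of-$Q$ issue at all. It first treats the case where $P$ is \emph{strictly} positive (i.e.\ $\ker P$ contains no positive non-zero vectors): for $f\in\Fix Q$, the vector $P\modulus{f}-\modulus{f}=P\modulus{f}-\modulus{Qf}\ge 0$ lies in $\ker P$ since $P$ is a projection, so strict positivity forces $\modulus{f}=P\modulus{f}\in\Ima P$; then a dimension-comparison lemma for moduli (\cite[Lemma~C-III-3.11]{Nagel1986}) gives $\dim\Fix Q\le\dim\Ima P$. The general case is reduced to this one by quotienting out the absolute kernel $I=\{f:P\modulus{f}=0\}$, which is a closed ideal invariant under both $P$ and $Q$; the induced operators $P_/,Q_/$ on $E/I$ still satisfy the domination inequality, $P_/$ is strictly positive, and the quotient map restricts to a surjection $\Ima P\to\Ima P_/$ and an injection $\Fix Q\to\Fix Q_/$. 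If you want to salvage a proof along your own lines, you would need to state and prove the multiplicity-comparison theorem you are invoking; as written, that step is a placeholder, not an argument.
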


\begin{proof}
	Note that \eqref{eq:dominating-projection} implies that $P$ is positive. We begin by proving the result for the particular case when $P$ is strictly positive, i.e., when the kernel $\ker P$ does not contain any positive non-zero elements. In this case, the modulus of each vector $f \in \Fix Q$ belongs to the range $\Ima P$. Indeed, fixing such a $f$ we have
	\[
		P\modulus{f}-\modulus{f}=P\modulus{f}-\modulus{Qf}\geq 0;
	\]
	moreover, the vector $P\modulus{f}-\modulus{f}$ is in $\ker P$ and must therefore be $0$ by strict positivity. Hence $\modulus{f}=P\modulus{f}\in \Ima P$. We have thus shown that the modulus of every vector of $\Fix Q$ is in $\Ima P$, which implies $\dim \Fix Q \leq \dim \Ima P$ by \cite[Lemma~C-III-3.11]{Nagel1986}. This verifies the assertion  in the particular case when $P$ is strictly positive.

	We now consider the general case. Let 
	\[
		I := \{f \in E : P\modulus{f}=0\}
	\]
	denote the so-called \emph{absolute kernel} of $P$. Clearly, $I$ is a closed $P$-invariant ideal and the inequality \eqref{eq:dominating-projection} implies that it is also $Q$-invariant. Let $\pi$ denote the quotient map from the Banach lattice $E$ to the quotient Banach lattice $E/I$. Since $I$ is both $P$- and $Q$-invariant, therefore $P$ and $Q$ induce operators $P_/, Q_/$ on $E/I$ respectively. 
	
	The operator $P_/$ is also a positive projection and since $\pi$ is a lattice homomorphism, we observe by using the assumption~\eqref{eq:dominating-projection} that
	\[
		\modulus{Q_/(\pi f)} = \modulus{\pi Qf} = \pi\modulus{Qf} \leq \pi P\modulus{f} = P_/(\pi\modulus{f}) = P_/ \modulus{\pi f}
	\]
	for all $f \in E$. Hence, $Q_/$ and $P_/$ also satisfy the analogous estimate of~\eqref{eq:dominating-projection} in the quotient space $E/I$. In addition, $P_/$ is strictly positive. To see this, let $\widehat{f}$ be a positive vector in $\ker P_/$ and $f$ be a positive element of $E$ such that  $\widehat{f}=\pi f$. Then 
	\[
		\pi Pf=P_/(\pi f)=P_/\widehat{f}=0,
	\]
	which means $Pf$ must be in $I$. However as $f$ and $Pf$ are positive, then, in fact, $f$ must belong to $I$. Hence, $\widehat{f}$ is the zero vector of $E/I$. Therefore $\ker P_/$ has no positive non-zero elements which shows that $P_/$ is strictly positive, as claimed. Whence by what we proved in the special case at the beginning of the proof, we can conclude that
	\[
		\dim \Fix Q_/ \leq \dim \Ima P_/.
	\]
	
	The assertion will now follow if we are able to prove the following two properties of the quotient map $\pi$:
	\begin{enumerate}[ref=(\roman*)]
		\item The restriction of $\pi$ to $\Ima P$ maps surjectively onto $\Ima P_/$. \label{lem:dom-projection:bijection1}
		\item The restriction of $\pi$ to $\Fix Q$ maps injectively into $\Fix Q_/$.\label{lem:dom-projection:bijection2}
	\end{enumerate}
	Assertion~\ref{lem:dom-projection:bijection1} follows from the fact that $\pi P = P_/ \pi$ and from the surjectivity of $\pi$. To prove \ref{lem:dom-projection:bijection2}, let $f$ be a non-zero vector belonging to $\Fix Q$. Then we have $Q_/(\pi f) = \pi(Qf) = \pi f$. In other words, $\pi$ maps $f$ to $\Fix Q_/$. Moreover, using \eqref{eq:dominating-projection}, we have
	\[
		0 \neq f = \modulus{Qf} \leq P\modulus{f},
	\]
	and so $\pi f$ is non-zero; hence the claimed injection follows. 
\end{proof}

We note in passing that, in the situation of the above proof, it can even be shown that both mappings
\begin{align*}
	\pi\restrict{\Ima P} : \Ima P \to \Ima P_/
	\qquad \text{and} \qquad 
	\pi\restrict{\Fix Q} : \Fix Q \to \Fix Q_/
\end{align*}
are  bijections. Next, we use the previous lemma to derive the following dimension estimate for eigenspaces of eventually positive operators.

\begin{theorem}
	\label{thm:dominating-riesz-point}
	
	Let $E$ be a complex Banach lattice and $T \in \calL(E)$ be uniformly eventually positive. Assume the spectral radius $\spr(T)$ is a Riesz point of $T$ and the corresponding pole order of the resolvent is $1$. Then we have
	\[
		\dim \ker(\lambda I - T)\leq \dim \ker(\spr(T) I-T)
	\]
	for every complex number $\lambda$ with modulus $\modulus{\lambda}=\spr(T)$.
\end{theorem}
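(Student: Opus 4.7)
The plan is to apply Lemma~\ref{lem:dominating-projection} with $P$ taken as the spectral projection of $T$ at $r := \spr(T)$, and with $Q \in \calL(E)$ an operator still to be constructed, arranged so that $\ker(\lambda I - T) \subseteq \Fix Q$ and $|Qf| \le P|f|$ for every $f \in E$. Once such a $Q$ is produced, the lemma immediately yields
\[
\dim \ker(\lambda I - T) \;\le\; \dim \Fix Q \;\le\; \dim \Ima P \;=\; \dim \ker(rI - T),
\]
since the assumption that $r$ is a first-order pole gives $\Ima P = \ker(rI - T)$.

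First, I would verify the relevant properties of $P$. Since $r$ is a first-order pole of $\Res(\argument, T)$, the norm limit $P = \lim_{\mu \downarrow r}(\mu - r)\Res(\mu, T)$ exists in $\calL(E)$, $P^2 = P$, and $\Ima P$ is finite-dimensional by hypothesis. Splitting the Neumann series $\Res(\mu, T) = \sum_{k \ge 0} T^k / \mu^{k+1}$ at the eventual-positivity threshold $n_0$, the finite prefix (bounded in operator norm) is killed by the factor $\mu - r$ as $\mu \downarrow r$, while the tail $(\mu - r)\sum_{k \ge n_0} T^k / \mu^{k+1}$ is, for every $\mu > r$, a sum of positive operators. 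Taking the limit forces $P \ge 0$.

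For the construction of $Q$, write $\lambda = re^{i\theta}$ and set
\[
Q_\mu \;:=\; e^{i\theta}(\mu - r)\,\Res(\mu e^{i\theta}, T), \qquad \mu > r.
\]
If $Tf = \lambda f$, a direct computation gives $\Res(\mu e^{i\theta}, T)f = (e^{i\theta}(\mu - r))^{-1}f$, hence $Q_\mu f = f$, so $\ker(\lambda I - T) \subseteq \Fix Q_\mu$ for every $\mu > r$. For arbitrary $f \in E$, expanding $Q_\mu$ as the Abel-mean series $(1-s)\sum_{k \ge 0} s^k (e^{-i\theta} T/r)^k$ with $s = r/\mu$, splitting again at $n_0$, and using $|T^k f| \le T^k |f|$ for $k \ge n_0$ (which is valid because $T^k$ is positive there), I would derive the pointwise estimate
\[
|Q_\mu f| \;\le\; P_\mu |f| + \varepsilon_\mu(f),
\]
where $P_\mu := (\mu - r)\Res(\mu, T) \to P$ in operator norm, and $\|\varepsilon_\mu(f)\| \to 0$ as $\mu \downarrow r$ because it originates from the finite prefix of the series multiplied by $\mu - r$. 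Consequently $\sup_{\mu > r}\|Q_\mu\| < \infty$.

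The hard part is the passage to the limit. Taking a cluster point $Q$ of the norm-bounded net $(Q_\mu)_{\mu > r}$ in a suitable weak operator topology on $\calL(E)$, and applying Mazur's theorem to the convex norm-closed (hence weakly closed) set $\{g \in E : |g| \le P|f|\}$, the estimate above should yield $|Qf| \le P|f|$ in the limit; the identity $Q_\mu f = f$ on $\ker(\lambda I - T)$ transfers trivially. The principal obstacle is to ensure that this limit is an honest operator on $E$ rather than merely an element of $\calL(E, E'')$, and this is the step where the finite-dimensionality of $\Ima P$ becomes essential: the estimate $|Qf| \le P|f|$ confines $Qf$ to the order interval generated in $E$ by a vector of the finite-dimensional subspace $\Ima P$, so the limit actually lies in $E$. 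With $Q \in \calL(E)$ in hand, Lemma~\ref{lem:dominating-projection} concludes the proof.
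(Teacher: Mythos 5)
Your setup is essentially right and closely parallels the paper: you introduce the same Abel-type approximants $Q_\mu$ and $P_\mu$, split the Neumann series at the eventual-positivity threshold, and derive the asymptotic domination $|Q_\mu f| \le P_\mu|f| + \varepsilon_\mu(f)$ with the error tending to $0$. The gap is in the passage to the limit. A norm-bounded net in $\calL(E)$ does not have a WOT cluster point in $\calL(E)$ unless $E$ is reflexive; the natural cluster point lives in $\calL(E, E'')$, as you note. Your rescue attempt --- that finite-dimensionality of $\Ima P$ forces the cluster point into $E$ --- does not work: the order interval $\{g : |g| \le P|f|\}$ is not finite-dimensional just because $P|f|$ lies in a finite-dimensional subspace. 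For example, in $E = C[0,1]$ with $P|f|$ equal to the constant function $\one$, the set $\{g \in E'' : |g|_{E''} \le \one\}$ contains bidual elements (discontinuous objects) that are not in $E$; in general, $E$ is an order ideal in $E''$ only when $E$ has order-continuous norm. So on a Banach lattice like $C(K)$ your argument would not produce an honest $Q \in \calL(E)$ to feed into Lemma~\ref{lem:dominating-projection}.

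The paper sidesteps this exactly where you get stuck, by replacing the weak cluster point with an \emph{ultrapower}: one fixes a free ultrafilter $\calU$ on $\bbN$, a sequence $r_n \downarrow 1$, and lifts the bounded sequences $(Q_n)$ and $(P_n)$ to operators $\widehat{Q}$ and $\widehat{P}$ on $E^\calU$. No limit is taken --- the quotient construction of the ultrapower produces these operators directly --- and $E^\calU$ is again a Banach lattice, so $|\widehat{Q}g| \le \widehat{P}|g|$ holds on $E^\calU$ and Lemma~\ref{lem:dominating-projection} applies there. Since $P_n \to P$ in norm, $\widehat{P} = P^\calU$ and $\dim\Ima P^\calU = \dim\Ima P$ (finite rank); the canonical embedding $i : E \to E^\calU$ then carries $\ker(\lambda I - T)$ injectively into $\Fix\widehat{Q}$, giving the dimension estimate. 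In short: your estimates are fine, but the limit must be taken in an ultrapower rather than in any weak operator topology on $\calL(E)$.
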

\begin{proof}
	Since the result trivially holds when $\spr(T)=0$, we may assume without loss of generality that $\spr(T)=1$. Let $\lambda$ be a complex number of modulus $\modulus{\lambda} = 1$. Since $1$ is a first order pole of the resolvent $\Res(\argument,T)$, the range of the corresponding spectral projection $P$ is $\ker (I-T)$. Let $\calU$ be a free ultrafilter on $\bbN$ and $(r_n)$ be a sequence of real numbers decreasing to $1$. We consider the operator sequences
	\[
		 (Q_n) := \Big((r_n\lambda - \lambda) \Res(r_n \lambda,T)\Big) \qquad \text{and} \qquad (P_n) := \Big((r_n-1)\Res(r_n,T)\Big).
	\]
	The sequence $(P_n)$ is bounded since $1$ is a first order pole of $\Res(\argument, T)$; in fact, $(P_n)$ even converges, with respect to the operator norm, to the spectral projection $P$ of $T$ associated to the spectral value $1$.
	
	Next we observe that the sequence $(Q_n)$ is asymptotically dominated by $(P_n)$ in the following sense: There exist two sequences of operators $(R_n)$ and $(S_n)$ in $\calL(E)$ such that $\norm{R_n}, \norm{S_n} \to 0$ and such that
	\begin{align}
		\label{eq:thm:dominating-riesz-point:domination-in-proof}
		\modulus{Q_n f} \le \modulus{S_nf} + R_n\modulus{f} + P_n \modulus{f}
	\end{align}
	for each index $n$ and each vector $f \in E$. To see this, just use that there exists an exponent $k_0 \in \bbN_0$ such that $T^k$ is positive for all $k \ge k_0$, and then define
	\begin{align*}
		R_n  :=  - (r_n-1) \sum_{k=0}^{k_0-1}\frac{1}{(r_n \lambda)^{k+1}}T^k
	\end{align*}
	and \begin{align*}
		S_n :=  (r_n\lambda - \lambda) \sum_{k=0}^{k_0-1}\frac{1}{(r_n \lambda)^{k+1}}T^k
	\end{align*}
	for each $n$. Clearly, both sequences converge to $0$, and the claimed inequality~\eqref{eq:thm:dominating-riesz-point:domination-in-proof} follows readily from the Neumann series expansion of $\Res(r_n \lambda,T)$ and $\Res(r_n ,T)$ and from the fact that $\modulus{T^k f} \le T^k \modulus{f}$ for each $k \ge k_0$ due to the positivity of $T^k$.
	
	As a consequence of~\eqref{eq:thm:dominating-riesz-point:domination-in-proof}, the sequence $(Q_n)$ is bounded as well. Hence, the sequences $(Q_n)$ and $(P_n)$ induce bounded operators $\widehat{Q}$ and $\widehat{P}$ on the ultrapower $E^\calU$, respectively. Moreover since $P_n \to P$ with respect to the operator norm, we know that $\widehat{P}$ is actually the projection $P^{\calU}$. 
	
	Now we employ the inequality~\eqref{eq:thm:dominating-riesz-point:domination-in-proof} a second time: it yields that we have 
	\[
		{\modulus{\widehat{Q} g} \leq \widehat{P} \modulus{g}}
	\]
	for all $g \in E^{\calU}$. Thus, Lemma~\ref{lem:dominating-projection} is applicable and yields
	\[
		\dim \Fix \widehat{Q} \leq \dim \Ima \widehat{P}=\dim \Ima P^{\calU}=\dim \Ima P=\dim \ker(I-T),
	\]
	where the penultimate equality holds because $P$ is a finite rank projection.
	
	Finally, we show that $\dim \ker(\lambda I-T)\leq \dim \Fix \widehat{Q}$ to conclude the proof. To this end, 
	let $i : E\to E^{\calU}$ denote the canonical embedding and $f$ be a vector in $\ker(\lambda I-T)$. Then ${\Res(\mu,T)f = (\mu-\lambda)^{-1}f}$ for all $\mu$ in the resolvent set of $T$ and hence
	\[
		(r_n \lambda - \lambda) \Res(r_n \lambda,T) f= (r_n\lambda - \lambda) (r_n \lambda -\lambda)^{-1}f=f
	\]
	for all $n \in \bbN$, which implies $\widehat{Q}\, i (f)=i (f)$. We have thus shown that $i$ maps $\ker (\lambda I- T)$ to $\Fix \widehat{Q}$ and it does so injectively, as $i$ is an injection. This verifies our claim.
\end{proof}

The dimension estimate from the theorem above together with another ultrapower argument now allows us to easily derive our Niiro--Sawashima theorem:

\begin{proof}[Proof of Theorem~\ref{thm:niiro-sawashima}]
	Fix a free ultrafilter $\calU$ on $\bbN$. Since $\spr(T)$ is also a Riesz point of $T^{\calU}$ (Proposition~\ref{prop:riesz-point-via-ultrapower}), the corresponding spectral space is finite-dimensional. If $\lambda$ is any spectral value of $T$ with modulus $\modulus{\lambda} = \spr(T)$, then by Theorem~\ref{thm:dominating-riesz-point},
	\begin{align}
		\label{eq:dimension-esimtate-ultra-proof}
		\dim \ker\left(\lambda I-T^{\calU}\right) \leq \dim \ker \left(\spr(T) I-T^{\calU}\right) < \infty.
	\end{align}
	Therefore $\lambda$ is a Riesz point of $T$, again by Proposition~\ref{prop:riesz-point-via-ultrapower}.
	
	The fact that the pole order of $\Res(\argument,T)$ at $\lambda$ is $1$ follows from the inequality~\eqref{eq:thm:dominating-riesz-point:domination-in-proof} that we derived in the proof of Theorem~\ref{thm:dominating-riesz-point} which holds for each uniformly eventually positive operator.
		
	Finally, for first order poles the spectral space coincides with the eigenspace. In addition, the spectral projection of $T^{\calU}$ associated to $\lambda$ is actually the lifting of the spectral projection of $T$ associated to $\lambda$, and the same is true for the spectral projections associated to $\spr(T)$ (Proposition~\ref{prop:riesz-point-via-ultrapower}). These together with~\eqref{eq:dimension-esimtate-ultra-proof} imply the dimension estimate claimed at the end of Theorem~\ref{thm:niiro-sawashima}.
\end{proof}

We close this section with a brief remark concerning our usage of ultrapowers in the proof of Theorem~\ref{thm:niiro-sawashima}:

\begin{remark}
	In order to prove Theorem~\ref{thm:niiro-sawashima}, we first lifted the operator $T$ to an ultrapower and then we applied the dimension estimate from Theorem~\ref{thm:dominating-riesz-point} to the lifted operator $T^{\calU}$. It is interesting to note that the proof of Theorem~\ref{thm:dominating-riesz-point} itself also employs an ultrapower argument. Hence, our proof of the Niiro--Sawashima type result in Theorem~\ref{thm:niiro-sawashima} actually relies on an iterated ultrapower argument, i.e., the operator
	$
		\left(T^{\calU}\right)^{\calU}
	$
	(implicitly) occurs in the proof.
\end{remark}

\section{Uniform convergence for eventually positive semigroups}
\label{section:uniform-convergence}

As mentioned in the previous section, we are now going to characterize uniform convergence of eventually positive semigroups. In general, the rescaled version $(e^{t(A-\spb(A)I)})_{t \in [0,\infty)}$ of a $C_0$-semigroup on a (complex) Banach space $E$ converges uniformly as $t\to \infty$ if and only if it is norm continuous at infinity, the spectral bound $\spb(A)$ is a first order pole of the resolvent and the only spectral value of $A$ with the largest real part. This was proven by Thieme in \cite[Theorem~2.7]{Thieme1998}. Furthermore, he showed \cite[Theorem~3.4]{Thieme1998} that if $E$ is a Banach lattice and the semigroup is positive then the last condition -- that $\spb(A)$ is the only spectral value with the largest real part -- can be dropped. This is because when the spectral bound is a first order pole and the semigroup is positive, then the peripheral spectrum is cyclic (\cite[Proposition~C-III-2.9 and Theorem~C-III-2.10]{Nagel1986}).

Unfortunately, we do not yet know if the same can be said of eventually positive semigroups. Nevertheless, we will show in Theorem~\ref{thm:uniform-convergence} that this roadblock can be bypassed if (after appropriate rescaling) the semigroup is bounded.

Besides, uniform convergence of $C_0$-semigroups to a finite rank operator on Banach spaces has also been studied (usually called \emph{asynchronous exponential growth}). For instance, Webb proved a characterization \cite[Proposition~2.3]{Webb1987} in terms of the \emph{$\alpha$-growth bound} of the semigroup and gave some sufficient conditions in case of positive semigroups on Banach lattices \cite[Remark~2.2]{Webb1987}. Using Webb's result, Thieme provided an alternate characterization \cite[Theorem~3.3]{Thieme1998} employing the concept of norm continuity at infinity; see also \cite[Sections~4.6 and 4.7]{MagalRuan2018}. As a consequence of \cite[Theorems~3.3 and 3.4]{Thieme1998}, one obtains that the rescaling $(e^{t(A-\spb(A)I)})_{t \in [0,\infty)}$ of a positive $C_0$-semigroup on a Banach lattice converges uniformly to a finite rank operator if and only if $(e^{tA})_{t \in [0,\infty)}$ is norm continuous at infinity and $\spb(A)$ is a first order pole of the resolvent with finite-dimensional spectral space. We will show in Theorem~\ref{thm:uniform-convergence-finite-residuum} that this characterization remains true for uniformly eventually positive semigroups. Here again, the setting of eventual positivity requires different methods than its positive counterpart.

Before we state the main theorems of this section, we note that Thieme actually used the concept of \emph{essentially norm continuous} semigroups in order to characterize uniform convergence. However, it was shown by Blake in his PhD thesis \cite[Corollary~3.3.7]{Blake1999} as well as by Nagel and Poland \cite[Corollary~4.7]{NagelPoland2000} that this is equivalent to norm continuity at infinity of the semigroup.

The following theorem gives a characterization for a uniformly eventually positive semigroup $(e^{tA})_{t \in [0,\infty)}$ to be \emph{uniformly exponentially balancing}; here, we call a $C_0$-semigroup $(e^{tA})_{t \in [0,\infty)}$ \emph{uniformly exponentially balancing} if $\spb(A) > -\infty$ and the rescaled semigroup $(e^{t(A-\spb(A)I)})_{t \in [0,\infty)}$ converges in the operator norm (cf.\ \cite[Definition~2.1 and Proposition~2.3]{Thieme1998}); note that the limit operator is automatically non-zero in this case.

\begin{theorem}
	\label{thm:uniform-convergence}
	
	Let $(e^{tA})_{t \in [0,\infty)}$ be a uniformly eventually positive $C_0$-semigroup on a complex Banach lattice $E$ and assume that $\spb(A) > -\infty$. The semigroup $(e^{tA})_{t \in [0,\infty)}$ is uniformly exponentially balancing if and only if the following three conditions are satisfied:
	\begin{enumerate}[ref=(\roman*)]
		\item\label{thm:uniform-convergence:item:norm-continuous-at-infinity} The semigroup $(e^{tA})_{t \in [0,\infty)}$ is norm continuous at infinity.
		
		\item\label{thm:uniform-convergence:item:bounded} The rescaled semigroup $(e^{t(A-\spb(A)I)})_{t \in [0,\infty)}$ is bounded.
		
		\item\label{thm:uniform-convergence:item:pole} The spectral bound $\spb(A)$ is a pole of the resolvent of $A$.
	\end{enumerate}
\end{theorem}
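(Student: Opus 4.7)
My plan is to prove this by reducing to Thieme's characterisation \cite[Theorem~2.7]{Thieme1998} of uniform exponential balancing, which states that a $C_0$-semigroup with $\spb(A) > -\infty$ is uniformly exponentially balancing if and only if (a) it is norm continuous at infinity, (b) $\spb(A)$ is a first-order pole of the resolvent of $A$, and (c) $\spb(A)$ is the only spectral value of $A$ on the line $\spb(A)+i\bbR$.

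The necessity of (i)--(iii) is then straightforward: (a) is (i), (iii) is weaker than (b), and the boundedness condition (ii) follows from operator norm convergence of the rescaled semigroup.

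For sufficiency, I would first pass to the rescaled generator $A - \spb(A)I$, noting that this preserves uniform eventual positivity as well as each of (i)--(iii), so that we may assume $\spb(A)=0$. Thieme's condition (a) is then just (i). To verify (b), I would argue by contradiction: if $0$ were a pole of the resolvent of order $m\geq 2$, then the spectral projection $P$ at $0$ would produce a closed invariant subspace $\Ima P$ on which $A$ restricts to a bounded nilpotent operator $N$ with $N^{m-1}\neq 0$; then $e^{tA}|_{\Ima P}=\sum_{k=0}^{m-1}\frac{t^k}{k!}N^k$ would grow polynomially of degree $m-1$, contradicting the boundedness assumption (ii). For (c), all hypotheses of Lemma~\ref{lem:nci-dominant-bounded} (uniform eventual positivity, norm continuity at infinity, boundedness, $\spb(A)=0$) are in force, and the lemma directly yields $\perSpec(A)=\{0\}$. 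Thieme's criterion then closes the argument.

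The conceptually non-trivial step is the verification of (c). In the classical proof for positive semigroups one would appeal to cyclicity of the peripheral spectrum of the generator (cf.\ \cite[Theorem~C-III-2.10]{Nagel1986}), but, as emphasized at the start of this section, no such cyclicity theorem is presently known for generators of eventually positive semigroups. Lemma~\ref{lem:nci-dominant-bounded} sidesteps this gap by transferring the spectral question to the single semigroup operator $e^{tA}$ (for suitably small $t$) via the spectral mapping theorem for semigroups norm-continuous at infinity, and then applying the single-operator cyclicity result \cite[Theorem~7.1]{Glueck2017}; this detour through single-operator cyclicity is what genuinely distinguishes the eventually positive case from its positive counterpart. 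The remaining ingredients -- the pole-order reduction and the appeal to Thieme's criterion -- are routine.
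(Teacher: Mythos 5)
Your proposal is correct and follows the same route as the paper: reduce to Thieme's criterion, invoke Lemma~\ref{lem:nci-dominant-bounded} to pin down the peripheral spectrum, and upgrade the pole to first order from boundedness of the rescaled semigroup. The only difference is that you spell out the nilpotent-restriction argument for the pole-order step, which the paper merely asserts.
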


\begin{proof}
	The necessity of \ref{thm:uniform-convergence:item:norm-continuous-at-infinity}-\ref{thm:uniform-convergence:item:pole} is proved in \cite[Theorems~2.4 and 2.7]{Thieme1998}. For the converse, assume that \ref{thm:uniform-convergence:item:norm-continuous-at-infinity}-\ref{thm:uniform-convergence:item:pole} hold and that without loss of generality $\spb(A)=0$.  Then by Lemma~\ref{lem:nci-dominant-bounded}, the spectrum of $A$ intersects $i\bbR$ at the point $0$. Moreover, since the semigroup is bounded, $0$ is in fact a first order pole of the resolvent of $A$. Therefore $(e^{tA})_{t \in [0,\infty)}$ satisfies all the conditions sufficient for uniform convergence as $t \to \infty$ (\cite[Theorem~2.7]{Thieme1998}).
\end{proof}

As mentioned before, the characterization given in \cite[Theorem~3.3]{Thieme1998} for a semigroup to be uniformly exponentially balancing to a \emph{finite rank operator} can be improved for uniformly eventually positive semigroups. This is the content of the following theorem:

\begin{theorem}
	\label{thm:uniform-convergence-finite-residuum}
	
	Let $(e^{tA})_{t \in [0,\infty)}$ be a uniformly eventually positive $C_0$-semigroup on a complex Banach lattice $E$ and assume that $\spb(A) > -\infty$. The semigroup $(e^{tA})_{t \in [0,\infty)}$ is uniformly exponentially balancing and $\lim_{t \to \infty} e^{t(A-\spb(A)I)}$ has finite rank if and only if the following two conditions hold:
	\begin{enumerate}[ref=(\roman*)]
		\item\label{thm:uniform-convergence-finite-residuum:item:norm-continuous-at-infinity} The semigroup $(e^{tA})_{t \in [0,\infty)}$ is norm continuous at infinity.
		
		\item\label{thm:uniform-convergence-finite-residuum:item:simple-pole} The spectral bound $\spb(A)$ is a first order pole of the resolvent of $A$ and the corresponding spectral space is finite-dimensional.
	\end{enumerate}
\end{theorem}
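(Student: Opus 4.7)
\emph{Necessity.} Uniform convergence of the rescaled semigroup trivially implies norm continuity at infinity, and standard Laurent-expansion arguments identify the limit operator as the residue of $\Res(\argument,A)$ at $\spb(A)$; hence if this limit has finite rank, then $\spb(A)$ must be a first-order pole with finite-dimensional spectral space. This direction is essentially contained in \cite[Theorem~3.3]{Thieme1998}.

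\emph{Sufficiency.} Assume \ref{thm:uniform-convergence-finite-residuum:item:norm-continuous-at-infinity} and \ref{thm:uniform-convergence-finite-residuum:item:simple-pole}, and rescale so that $\spb(A) = 0$. The plan is to verify all three hypotheses of Theorem~\ref{thm:uniform-convergence}: conditions \ref{thm:uniform-convergence:item:norm-continuous-at-infinity} and \ref{thm:uniform-convergence:item:pole} there are immediate, so the crux is to show that $(e^{tA})_{t \in [0,\infty)}$ is bounded. Once this is done, Theorem~\ref{thm:uniform-convergence} gives uniform convergence as $t \to \infty$, and Lemma~\ref{lem:nci-dominant-bounded} combined with \ref{thm:uniform-convergence-finite-residuum:item:simple-pole} forces the limit to equal the spectral projection $P_A$ of $A$ at $0$, which has finite rank. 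To obtain boundedness, I apply the Niiro--Sawashima theorem (Theorem~\ref{thm:niiro-sawashima}) to the single operator $e^{tA}$ for $t > 0$ sufficiently small. Note that $e^{tA}$ is uniformly eventually positive as an operator, since $(e^{tA})^n = e^{ntA}$ is positive for all large enough $n$. By norm continuity at infinity, $\perSpec(A)$ is bounded, say $\perSpec(A) \subseteq [-i\alpha, i\alpha]$ \cite[Theorem~1.9]{MartinezMazon1996}; choose any $t \in (0, 2\pi/\alpha)$. Decomposing $E = \Ima P_A \oplus \ker P_A$, the operator $e^{tA}$ acts as the identity on $\Ima P_A$ and as $e^{tB}$ on $\ker P_A$, where $B$ is the part of $A$ in $\ker P_A$. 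Since $\perSpec(B) = \perSpec(A) \setminus \{0\}$ is compact and bounded away from $0$ (because $0$ is isolated in $\perSpec(A)$), the peripheral spectral mapping theorem \cite[Theorem~1.2]{MartinezMazon1996} applied to $B$ shows that $1 \notin \spec(e^{tB})$ for the chosen $t$. Hence $1$ is an isolated spectral value of $e^{tA}$ with spectral projection $P_A$; since $e^{tA}$ acts as the identity on $\Ima P_A$, the spectral space equals the eigenspace, so $1$ is a first-order pole of $e^{tA}$ with finite-dimensional spectral space. Theorem~\ref{thm:niiro-sawashima} now yields that every spectral value of $e^{tA}$ of modulus $1$ is a first-order pole with finite-dimensional spectral space. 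Since $\spec(e^{tA}) \cap \bbT = e^{t \perSpec(A)}$ is compact and consists of isolated points, it is finite; standard functional calculus decomposes $e^{tA} = \sum_{\modulus{\lambda} = 1} \lambda Q_\lambda + R$ with $\spr(R) < 1$, yielding uniform boundedness of the powers $(e^{tA})^n$. Combined with boundedness of $(e^{sA})_{s \in [0,t]}$, this gives boundedness of the full semigroup.

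The hard part is the delicate transfer of the first-order pole structure from the generator $A$ at $\spb(A)$ to the single operator $e^{tA}$ at $1$. This relies on Mart\'inez--Maz\'on's peripheral spectral mapping theorem under norm continuity at infinity, an explicit spectral decomposition of $A$ along the isolated pole $0$, and a careful choice of $t$ relative to the size of $\perSpec(A)$. Once this transfer is in place, Theorem~\ref{thm:niiro-sawashima} from Section~\ref{section:niiro-sawashima} does the crucial non-trivial work, and the ensuing quasi-compactness and power-boundedness argument is routine.
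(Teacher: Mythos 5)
Your proposal is correct and follows essentially the same route as the paper: the core step---transferring the first-order Riesz-pole structure of $A$ at $0$ to the single operator $e^{tA}$ at $1$ via the Mart\'inez--Maz\'on spectral mapping machinery, then invoking Theorem~\ref{thm:niiro-sawashima} to deduce quasi-compactness and power boundedness of $e^{tA}$---is exactly what the paper does in Lemma~\ref{lem:nci-dominant-pole}. Your organization differs slightly (you prove boundedness of the semigroup first and then invoke Theorem~\ref{thm:uniform-convergence}, whereas the paper concludes $\perSpec(A) = \{\spb(A)\}$ in Lemma~\ref{lem:nci-dominant-pole} and feeds this directly into Thieme's theorem), but the technical content is the same.

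One small imprecision: you write $\perSpec(B) = \perSpec(A) \setminus \{0\}$ and apply the peripheral spectral mapping theorem to $B$. This identity requires $\spb(B) = 0$, i.e.\ that $\perSpec(A)$ contains points other than $0$; if instead $\spb(B) < 0$, the peripheral spectrum of $B$ lives on a different vertical line and the identity fails. The conclusion $1 \notin \spec(e^{tB})$ still holds in that case, but by a different (and simpler) reason: norm continuity at infinity of the restricted semigroup gives $\gbd(B) = \spb(B) < 0$, hence $\spr(e^{tB}) < 1$. The paper handles exactly this dichotomy explicitly inside the proof of Lemma~\ref{lem:nci-dominant-pole}.
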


We remark that the notion \emph{Riesz point}, which we recalled after Theorem~\ref{thm:niiro-sawashima}, can also be defined -- in the same way -- for closed rather than bounded operators. Hence, condition~\ref{thm:uniform-convergence-finite-residuum:item:simple-pole} in Theorem~\ref{thm:uniform-convergence-finite-residuum} can be rephrased as follows: $\spb(A)$ is a Riesz point of $A$ and its order as a pole of the resolvent is $1$.

Whereas the proof of Theorem~\ref{thm:uniform-convergence} made use of Lemma~\ref{lem:nci-dominant-bounded}, for the proof of Theorem~\ref{thm:uniform-convergence-finite-residuum}, we give crux of the argument in the following lemma. It provides a second criterion for the peripheral spectrum of $A$ to consist solely of the spectral bound of $A$, when $A$ is the generator of an eventually positive semigroup that is norm continuous at infinity. A proof for the particular case when the semigroup is positive can be found in \cite[Proposition~3.2]{MartinezMazon1996}. In fact, in case the semigroup is positive, not only is the pole order of $\spb(A)$ irrelevant but there is no requirement for the spectral space to be finite-dimensional either.  Since the cyclicity result that underlies \cite[Proposition~3.2]{MartinezMazon1996} is (currently) not available in the eventual positivity case, so we base our argument on the Niiro--Sawashima type result from Theorem~\ref{thm:niiro-sawashima} instead.

\begin{lemma}
	\label{lem:nci-dominant-pole}
	
	On a complex Banach lattice $E$, let $(e^{tA})_{t \in [0,\infty)}$ be a uniformly eventually positive semigroup that is norm continuous at infinity. If $\spb(A)$ is a first order pole of the resolvent $\Res(\argument,A)$ and the corresponding spectral space is finite-dimensional, then $\perSpec(A) = \left\{ \spb(A) \right\}$.
\end{lemma}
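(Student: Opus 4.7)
The plan is to mimic the strategy behind the proof of Lemma~\ref{lem:nci-dominant-bounded}: pass from the generator $A$ to a single operator $e^{tA}$ for a carefully chosen small $t>0$, then exploit the cyclicity result \cite[Theorem~7.1]{Glueck2017} for single operators. The essential twist is that we no longer have global boundedness of the semigroup as a hypothesis, so the power boundedness of $e^{tA}$ (which is what the cyclicity result requires) has to be produced by other means. This is precisely the role of Theorem~\ref{thm:niiro-sawashima}.

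After rescaling, we may assume $\spb(A)=0$. Norm continuity at infinity forces $\gbd(A)=\spb(A)=0$ by \cite[Corollary~1.4]{MartinezMazon1996} and yields the existence of an $\alpha \geq 0$ with $\perSpec(A)\subseteq i[-\alpha,\alpha]$, as in \cite[Theorem~1.9]{MartinezMazon1996}. The spectral mapping theorem for the peripheral spectrum \cite[Theorem~1.2]{MartinezMazon1996} then gives the inclusion $\spec(e^{tA})\cap \bbT \subseteq \{e^{it\gamma}:\gamma\in[-\alpha,\alpha]\}$ for every $t\geq 0$. I would fix a small $t>0$ with $t\alpha < \pi/2$; for such $t$ the arc on the right-hand side lies strictly in the right half of $\bbT$, and the map $\lambda\mapsto e^{t\lambda}$ is injective on $\perSpec(A)$. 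Combining the last observation with the spectral mapping theorem for isolated poles of a generator, one sees that $1$ is a first-order pole of $\Res(\argument,e^{tA})$ whose spectral space coincides with the (finite-dimensional) spectral space of $A$ at $0$; and since $\spr(e^{tA})=e^{t\gbd(A)}=1$, the value $1$ is actually the spectral radius of $e^{tA}$.

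Now I would invoke Theorem~\ref{thm:niiro-sawashima} for the (uniformly eventually positive) operator $T:=e^{tA}$. It tells us that every spectral value of $T$ of modulus $1$ is a first-order pole with finite-dimensional spectral space, so that $T$ is quasi-compact and, by splitting $T$ into its finite-rank peripheral part and a complement of essential spectral radius strictly less than $1$, power bounded. With power boundedness in hand the cyclicity result \cite[Theorem~7.1]{Glueck2017} applies: for each $\lambda\in\spec(T)\cap\bbT$, all integer powers $\lambda^n$ lie in $\spec(T)\cap\bbT$ as well. Since the latter set is contained in the right half of $\bbT$ by the choice of $t$, iterating $\lambda\mapsto\lambda^n$ would otherwise leave this arc; hence $\spec(T)\cap\bbT=\{1\}$.

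Finally, the spectral inclusion theorem for $C_0$-semigroups \cite[Theorem~IV.3.6]{EngelNagel2000} yields $\{e^{t\mu}:\mu\in\perSpec(A)\}\subseteq \spec(e^{tA})\cap\bbT=\{1\}$, and the injectivity of $\lambda\mapsto e^{t\lambda}$ on $\perSpec(A)$ (built into the choice of $t$) forces $\perSpec(A)\subseteq\{0\}$; equality holds because $0=\spb(A)\in\spec(A)$ by \cite[Theorem~7.6]{DanersGlueckKennedy2016a}. The main obstacle compared with the positive-semigroup argument of \cite[Proposition~3.2]{MartinezMazon1996} is the current unavailability of a cyclicity theorem for the full peripheral spectrum of eventually positive generators; the Niiro--Sawashima-type Theorem~\ref{thm:niiro-sawashima} is precisely what lets us bypass this by converting the pole assumption on $\spb(A)$ into power boundedness of $e^{tA}$.
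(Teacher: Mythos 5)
Your overall strategy is exactly the paper's: pass to the single operator $T := e^{tA}$ for a suitably small $t$, show that $\spr(T)=1$ is a first order pole with finite-dimensional spectral space, apply Theorem~\ref{thm:niiro-sawashima} to get quasi-compactness and hence power boundedness, then invoke the cyclicity result \cite[Theorem~7.1]{Glueck2017} and feed the conclusion back to $A$ via the peripheral spectral mapping theorem. You also correctly identify the Niiro--Sawashima theorem as the device that replaces the (currently unavailable) cyclicity theorem for generators.

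However, there is a genuine gap at the pivotal step. You write that ``the spectral mapping theorem for isolated poles of a generator'' yields that $1$ is a first order pole of $\Res(\argument,e^{tA})$ with spectral space equal to $\Ima P$, where $P$ is the spectral projection of $A$ at $0$. No such off-the-shelf theorem is available for $C_0$-semigroups: the spectral mapping theorem fails in general, and even under norm continuity at infinity one only has a spectral mapping result for the peripheral spectrum (and its intersection with the essential spectrum), not for poles directly. The danger is precisely that $1$ could fail to be isolated in $\spec(e^{tA})$ -- or that the spectral space of $e^{tA}$ at $1$ could be strictly larger than $\Ima P$ -- because of spectral values of $e^{tA}$ that do not arise as exponentials of spectral values of $A$. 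Concretely, one must rule out $1 \in \spec\bigl(e^{tA}\restrict{\ker P}\bigr)$.

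The paper fills this hole in two steps. First, the spectral mapping theorem for the intersection of the peripheral and essential spectrum \cite[Theorem~1.3]{MartinezMazon1996} shows that $1$ is a Riesz point of $e^{tA}$ for $t\in(0,\pi/\alpha)$. Second, to pin down the pole order, one decomposes $E = \Ima P \oplus \ker P$ and runs a case analysis on the restricted semigroup $\bigl(e^{tA}\restrict{\ker P}\bigr)_{t\ge 0}$: either its growth bound is negative, in which case $1\notin\spec\bigl(e^{tA}\restrict{\ker P}\bigr)$ for trivial reasons, or its growth bound is $0$, in which case the restricted semigroup is again norm continuous at infinity and the peripheral spectral mapping theorem \cite[Theorem~1.2]{MartinezMazon1996} (together with $0\notin\spec\bigl(A\restrict{\ker P}\bigr)$ and $t<\pi/\alpha$) excludes $1$ from $\spec\bigl(e^{tA}\restrict{\ker P}\bigr)$. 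Since $0$ is a \emph{first order} pole of $\Res(\argument,A)$, the operator $A$ acts as $0$ on $\Ima P$, so $e^{tA}\restrict{\Ima P}$ is the identity and $1$ is indeed a first order pole of $\Res(\argument,e^{tA})$. Once you insert this argument, the remainder of your proof (power boundedness via quasi-compactness and first order peripheral poles, cyclicity, spectral inclusion, and $\spb(A)\in\spec(A)$ from \cite[Theorem~7.6]{DanersGlueckKennedy2016a}) goes through as written.
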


\begin{proof}
	Without loss of generality, we assume that the spectral bound of $A$ is $0$. Due to norm continuity at infinity, the growth bound is equal to the spectral bound, as shown in \cite[Corollary~1.4]{MartinezMazon1996}, and so the growth bound of $A$ is also $0$. As in the proof of Lemma~\ref{lem:nci-dominant-bounded}, there exists a number $\alpha>0$ such that $\perSpec(A) \subseteq [-i\alpha,i\alpha]$, and
	\begin{equation*}
		\spec(e^{tA}) \cap \bbT \subseteq \{e^{it\gamma}:\gamma \in [-\alpha,\alpha]\}
	\end{equation*}
for all $t \geq 0$. In fact, even a spectral mapping theorem for the intersection of the peripheral spectrum and the essential spectrum holds (\cite[Theorem~1.3]{MartinezMazon1996}), and hence $1$ is a Riesz point of $e^{tA}$ for all sufficiently small times $t$ -- namely for all $t \in (0,t_0)$, where $t_0 := \frac{\pi}{\alpha}$. 
	From now on, let $t \in (0,t_0)$.
	
	We show next that the order of the number $1$ as a pole of the resolvent $\Res(\argument,e^{tA})$ is $1$. Let $P$ denote the spectral projection associated to the pole $0$ of the resolvent $\Res(\argument,A)$. Then $0$ is not a spectral value of $A\restrict{\ker P}$. Now there are two possibilities for for the behaviour of the restricted semigroup $(e^{tA}\restrict{\ker P})_{t \in [0,\infty)}$: Either its growth bound is negative -- in which case $1$ is not a spectral value of $e^{tA}\restrict{\ker P}$ -- or its growth bound equals $0$. In the latter case, since the restricted semigroup is also norm-continuous at infinity, we can apply the spectral mapping theorem for the peripheral spectrum \cite[Theorem~1.2]{MartinezMazon1996} and conclude, again, that $1$ is not a spectral value of $e^{tA}\restrict{\ker P}$ (since $t \in (0,t_0)$). Hence, we only have to consider the pole order of the resolvent of $e^{tA}\restrict{\Ima P}$ at $1$. But since $\Res(\argument,A)$ has a first order pole at $0$, the operator $A$ acts as the zero operator on $\Ima P$, and so the operator
	\begin{align*}
		e^{tA}\restrict{\Ima P} = e^{t\big(A\restrict{\Ima P}\big)}
	\end{align*}
	acts as the identity on $\Ima P$. Therefore $1$ is indeed a first order pole the resolvent of $\Res(\argument,e^{tA}\restrict{\Ima P})$, and thus also of $\Res(\argument, e^{tA})$.
	
	 We can now apply our Niiro--Sawashima type theorem (Theorem~\ref{thm:niiro-sawashima}), which yields that every spectral value of $e^{tA}$ on the unit circle is a Riesz point and a first order pole of the resolvent. Consequently, $e^{tA}$ is power bounded and thus we may proceed as in the proof of Lemma~\ref{lem:nci-dominant-bounded} to conclude $\perSpec(A) =\{0\}$.
\end{proof}

\begin{proof}[Proof of Theorem~\ref{thm:uniform-convergence-finite-residuum}]
	Suppose the conditions \ref{thm:uniform-convergence-finite-residuum:item:norm-continuous-at-infinity} and \ref{thm:uniform-convergence-finite-residuum:item:simple-pole} hold. Then by Lemma~\ref{lem:nci-dominant-pole}, the peripheral spectrum of $A$ consists of $\spb(A)$ only. Thus $(e^{tA})_{t \in [0,\infty)}$ fulfils all the conditions adequate for a semigroup to converge in the operator norm topology to a finite rank operator as $t \to \infty$; see \cite[Theorem~3.3]{Thieme1998}.
	
	In the same reference, it is also shown that conditions \ref{thm:uniform-convergence-finite-residuum:item:norm-continuous-at-infinity} and \ref{thm:uniform-convergence-finite-residuum:item:simple-pole} are necessary for operator norm convergence of the semigroup.
\end{proof}

\subsection*{Acknowledgements} 

The first named author was supported by Deutscher Aka\-de\-mi\-scher Aus\-tausch\-dienst (Forschung\-sstipendium-Promotion in Deutschland).

\appendix

\section{Ultrapowers and Riesz points of linear operators}
\label{section:ultrapowers-riesz-points}

In this appendix, we recall a few facts about ultrapowers of Banach spaces and Riesz points. For further details about ultrapowers, we refer the reader to \cite{Heinrich1980}, \cite[p.~251--253]{Meyer-Nieberg1991}, and \cite[Section~V.1]{Schaefer1974}. By a \emph{Riesz point} of a bounded linear operator, we mean a spectral value that is a pole of the resolvent and has finite-dimensional spectral space. Details about such points can, for instance, be found in \cite{Barnes1999} or \cite{Barnes2005}; for related results, we also refer to the classical references \cite[Section~III.6.5]{Kato1980}, \cite[pp.\,330--332]{TaylorLay1986} and \cite[Section~VIII.8]{Yosida1980}.

\subsection*{A reminder of ultrapowers}
\label{subsection:reminder-of-ultrapowers}

Let $E$ be a (real or complex) Banach space and denote by $l^{\infty}(E)$, the space of all $E$-valued bounded sequences endowed with the canonical supremum norm, i.e., ${\norm{x}_{\infty}=\sup_{n \in \bbN} \norm{x_n}}$ for ${x=(x_n)_{n \in \bbN} \in l^{\infty}(E)}$.

Fix a free ultrafilter $\calU$ on $\bbN$ and define
	\[
		c_{\calU}(E):= \{ (x_n) \in l^{\infty}(E) : \lim_{\calU} \norm{x_n} =0 \}.
	\]
Then $c_{\calU}(E)$ is a closed subspace of $l^{\infty}(E)$ and the $\calU$-product $E^{\calU}$ denotes the quotient space
	\[
		E^{\calU} : = l^{\infty}(E) / c_{\calU}(E),
	\]
and is called the \emph{ultrapower of $E$ with respect to the ultrafilter $\calU$} or briefly the \emph{$\calU$-ultrapower of $E$}. For every element $x$ of $l^{\infty}(E)$, the equivalence class of $x$ in $c_{\calU}(E)$ will be denoted by $x^{\calU}$. It turns out that for every $x = (x_n) \in l^{\infty}(E)$, the norm of $x^{\calU}$ in $E^{\calU}$ is given by
	\[
		\norm{x^U} = \lim_{\calU} \norm{x_n}.
	\]
A proof of this can be found, for instance in \cite[Proposition~V.1.2]{Schaefer1974} for complex Banach spaces; the proof for real Banach spaces, however, remains the same.

Note that the above ultrapower construction is only interesting for infinite-dimensional Banach spaces: If $E$ is finite-dimensional, then its unit ball is compact, so it follows that $E^{\calU}$ is isomorphic to $E$.

If $E$ and $F$ are Banach spaces, then every bounded linear operator ${T \in \calL(E,F)}$ can be extended to a bounded linear operator ${T^{\calU} \in \calL\left(E^{\calU},F^{\calU}\right)}$ in a canonical way, i.e., ${T^{\calU} x^{\calU} = (Tx_n)^{\calU}}$ for every ${x=(x_n) \in l^{\infty}(E)}$. If ${E=F}$, then the mapping ${T \mapsto T^{\calU}}$ is an isometric homomorphism from the Banach algebra $\calL(E)$ to the Banach algebra $\calL\left(E^{\calU}\right)$ which preserves the identity element \cite[Proposition~V.1.2]{Schaefer1974}.

Ultrapowers play a significant role in operator theory. A part of this is because not only does the lifting ${T \mapsto {T^\calU}}$ preserve several spectral properties of $T$ but it also improves some. As an example, we have the following proposition, which we quote from \cite[Theorem~V.1.4 and its Corollary]{Schaefer1974}.

\begin{proposition}
	\label{prop:ultrapower-spectral-properties}

	Let $E$ be a complex Banach space and $\calU$ be a free ultrafilter on $\bbN$. If $T$ is a bounded operator on $E$, then the lifting $T^{\calU}$ has the following properties:
	\begin{enumerate}
		\item The spectrum of $T^{\calU}$ is exactly the spectrum of $T$.
		
		\item Both $T$ and $T^{\calU}$ have the same approximate point spectrum $\appSpec$ which is the same as the point spectrum of $T^{\calU}$, i.e.,
			\[
				\pntSpec\left(T^{\calU}\right)=\appSpec\left(T^{\calU}\right)=\appSpec(T).
			\]
		\item The lifting of the resolvent of $T$ is equivalent to resolvent of the lifting $T^{\calU}$. In other words, ${\Res(\argument,T)^{\calU} = \Res(\argument,T^{\calU})}$.
	\end{enumerate}
	In particular, a spectral value of $T$ is a $k$-th order pole of the resolvent $\Res(\argument,T)$ if and only if it is a $k$-th order pole of the resolvent $\Res(\argument,T^{\calU})$.
\end{proposition}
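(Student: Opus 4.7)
The plan is to exploit two structural facts that do almost all the work: the map $S \mapsto S^{\calU}$ is an isometric, unital algebra homomorphism from $\calL(E)$ into $\calL(E^{\calU})$, and the norm formula $\norm{x^{\calU}} = \lim_{\calU}\norm{x_n}$ holds. I would address the three items in the order (3), (2), (1), and then deduce the pole-order supplement as a corollary.

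For (3) I would apply the homomorphism to the defining identity $(\lambda I - T)\Res(\lambda,T) = I = \Res(\lambda,T)(\lambda I - T)$, valid for $\lambda$ in the resolvent set of $T$, to obtain $(\lambda I - T^{\calU})\Res(\lambda,T)^{\calU} = I = \Res(\lambda,T)^{\calU}(\lambda I - T^{\calU})$. This shows that such $\lambda$ lies in the resolvent set of $T^{\calU}$ with $\Res(\lambda,T^{\calU}) = \Res(\lambda,T)^{\calU}$, and as a byproduct yields the inclusion $\spec(T^{\calU}) \subseteq \spec(T)$. For (2) my strategy is to prove the chain $\appSpec(T) \subseteq \pntSpec(T^{\calU}) \subseteq \appSpec(T^{\calU}) \subseteq \appSpec(T)$: the first inclusion comes by lifting an approximate-eigenvector sequence $x_n \in E$ to a genuine unit eigenvector $(x_n)^{\calU} \in E^{\calU}$ via the norm formula, the middle inclusion is trivial, and the last requires selecting, for each approximate eigenvector $x^{(k)} = (x_n^{(k)})^{\calU}$ of $T^{\calU}$ at $\lambda$, a representative index $n_k$ on which both $\norm{x_{n_k}^{(k)}}$ is close to $1$ and $\norm{(\lambda I - T)x_{n_k}^{(k)}}$ is small---possible because each of these conditions holds on a set belonging to $\calU$.

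For (1) I would combine the inclusion $\spec(T^{\calU}) \subseteq \spec(T)$ from (3) with a case analysis of an arbitrary $\lambda \in \spec(T)$. If $\lambda \in \appSpec(T)$, then (2) places $\lambda$ in $\pntSpec(T^{\calU}) \subseteq \spec(T^{\calU})$. Otherwise $\lambda I - T$ is bounded below but fails to be surjective, so its range is closed and proper; picking $y \in E \setminus \Ima(\lambda I - T)$ and supposing for contradiction that $\lambda I - T^{\calU}$ is surjective, a preimage $(x_n)^{\calU}$ of the canonical image of $y$ in $E^{\calU}$ would yield $(\lambda I - T)x_n \to y$ along $\calU$, forcing $y \in \overline{\Ima(\lambda I - T)} = \Ima(\lambda I - T)$, a contradiction. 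The need to descend from a single ultrapower inverse to concrete approximations inside $E$---ultimately relying on the closedness of the range guaranteed by the bounded-below hypothesis---is the main (mildly) non-formal step in the argument.

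The pole-order supplement then follows by comparing Laurent expansions. Around an isolated $\lambda_0 \in \spec(T)$ one has $\Res(\argument,T) = \sum_{n \ge -k} C_n (\argument - \lambda_0)^n$ with $C_n \in \calL(E)$; by (3) the Laurent coefficients of $\Res(\argument, T^{\calU})$ at $\lambda_0$ are precisely $C_n^{\calU}$, and since $S \mapsto S^{\calU}$ is isometric, $C_n = 0$ if and only if $C_n^{\calU} = 0$. Hence the principal parts of the two expansions terminate at the same index, so the pole orders coincide.
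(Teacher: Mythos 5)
The paper does not prove this proposition itself---it is quoted verbatim from \cite[Theorem~V.1.4 and its Corollary]{Schaefer1974}---so there is no in-paper argument to compare against. Your blind proof is correct and follows, in substance, the standard route: the isometric unital algebra homomorphism $S \mapsto S^{\calU}$ immediately gives $\spec(T^{\calU}) \subseteq \spec(T)$ and the identity $\Res(\lambda,T)^{\calU} = \Res(\lambda,T^{\calU})$ for $\lambda$ in the resolvent set of $T$; lifting an approximate-eigenvector sequence of $T$ to a genuine eigenvector of $T^{\calU}$, together with a diagonal-type selection of representatives in the reverse direction (using that the intersection of two sets in $\calU$ is nonempty), gives $\appSpec(T)=\pntSpec(T^{\calU})=\appSpec(T^{\calU})$; and the remaining case $\lambda \in \spec(T)\setminus\appSpec(T)$ is handled correctly by noting that $\lambda I - T$ is bounded below with closed, proper range, which is incompatible with surjectivity of $\lambda I - T^{\calU}$.

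The only step worth making a bit more explicit is the pole-order supplement. The claim that the Laurent coefficients of $\Res(\argument,T^{\calU})$ at $\lambda_0$ are exactly $C_n^{\calU}$ rests on two facts you leave implicit: first, by item (i) the point $\lambda_0$ is isolated in $\spec(T^{\calU})$ as well, so a Laurent expansion of $\Res(\argument,T^{\calU})$ around $\lambda_0$ actually exists; second, the lifting $\calL(E)\to\calL(E^{\calU})$, being a bounded linear map, commutes with the contour (Bochner) integrals $C_n = \frac{1}{2\pi i}\int_C (\mu-\lambda_0)^{-n-1}\Res(\mu,T)\dx \mu$. With these stated, the isometry of the lifting identifies the pole orders exactly as you conclude.
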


We remark in passing that both operators $T$ and $T^{\calU}$ have the same operator norm and if $E$ is Banach lattice, then $T$ is positive if and only if $T^{\calU}$ is positive.

\subsection*{A characterization of Riesz points}

Let $E$ be a complex Banach space and $T$ be a bounded linear operator on $E$. If $T$ is a contraction, then it was shown by Groh in \cite[Proposition~3.2]{Groh1984} that a spectral value $\lambda$ of $T$ with modulus $\modulus{\lambda}=1$ is a pole of the resolvent $\Res(\argument,T)$, if the corresponding eigenspace of a lifted operator $T^{\calU}$ is finite-dimensional. Then Caselles showed in \cite[Proposition~3.3]{Caselles1987} that the same result is true under more general assumptions. Even more generally, we have the following result:

\begin{proposition}
	\label{prop:riesz-point-via-ultrapower}
	
	Let $T$ be a bounded linear operator on a complex Banach space $E$ and let $\calU$ be a free ultrafilter on $\bbN$. For any complex number $\lambda$, the following assertions are equivalent:
	\begin{enumerate}[ref=(\roman*)]
		\item\label{prop:riesz-point-via-ultrapower:item:riesz-point-of-T} $\lambda$ is a Riesz point of $T$.
		
		\item\label{prop:riesz-point-via-ultrapower:item:riesz-point-of-lifting} $\lambda$ is a Riesz point of $T^{\calU}$.
		
		\item\label{prop:riesz-point-via-ultrapower:item:eigenspace} $\lambda$ is an element of the topological boundary of $\spec(T)$ and the eigenspace $\ker\left(\lambda I-T^{\calU}\right)$ is finite-dimensional.
	\end{enumerate}
	If the equivalent assertions {\upshape(i)}--{\upshape(iii)} are satisfied and $P$ denotes the spectral projection of $T$ associated with $\lambda$, then $P^\calU$ is the spectral projection of $T^\calU$ associated with $\lambda$.
\end{proposition}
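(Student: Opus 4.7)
The plan is to establish (i) $\Leftrightarrow$ (ii), then (ii) $\Rightarrow$ (iii) (which is essentially trivial), and finally (iii) $\Rightarrow$ (i), which carries the bulk of the work. For (i) $\Leftrightarrow$ (ii), I would use Proposition~\ref{prop:ultrapower-spectral-properties}(iii): since the resolvent lifts to the resolvent of the lifting, integrating $\Res(\argument,T)$ along a small positively oriented circle around $\lambda$ and lifting termwise yields that the spectral projection $P$ of $T$ at $\lambda$ lifts to the spectral projection of $T^\calU$ at $\lambda$. Thus $P^\calU$ really is that spectral projection (which simultaneously proves the concluding assertion of the proposition), and Proposition~\ref{prop:ultrapower-spectral-properties} ensures that pole orders are preserved. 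Finite-dimensionality of the spectral space is then pinned down by the fact that a finite-dimensional subspace of $E$ canonically coincides with its ultrapower image inside $E^\calU$, so $\dim \Ima P$ and $\dim \Ima P^{\calU}$ are finite or infinite simultaneously. For (ii) $\Rightarrow$ (iii), I simply note that a Riesz point of $T^\calU$ is isolated in $\spec(T^\calU) = \spec(T)$ and therefore lies in $\partial \spec(T)$, and that $\ker(\lambda I - T^\calU)$ sits inside the finite-dimensional spectral space of $T^\calU$ at $\lambda$.

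The plan for (iii) $\Rightarrow$ (i) is to show that $\lambda I - T$ is Fredholm of index~$0$ and that $\lambda$ is isolated in $\spec(T)$. Finite-dimensionality of $\ker(\lambda I - T)$ is immediate from the canonical embedding $\ker(\lambda I - T) \hookrightarrow \ker(\lambda I - T^\calU)$. To show closed range, I would argue by contradiction: if $\Ima(\lambda I - T)$ is not closed, the standard semi-Fredholm criterion produces a sequence $(x_n) \subset E$ with $\|x_n\|=1$, $(\lambda I - T)x_n \to 0$, and $\dist(x_n, \ker(\lambda I - T)) \geq 1/2$. Such a sequence has no norm-convergent subsequence, so by the Bessaga--Pe\l czy\'nski selection principle I may pass to a subsequence that is a basic sequence with some basis constant $K$. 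The shifted sequences $y^{(m)} := (x_{n+m})_n$ for $m = 0, 1, 2, \ldots$ then represent elements of $\ker(\lambda I - T^\calU)$, and they are linearly independent: the defining estimate for a basic sequence yields $\|\sum_{m=0}^M c_m x_{n+m}\| \geq (2K)^{-1} \max_m |c_m|$ uniformly in $n$, and this bound survives passage to the ultrapower. This contradicts $\dim \ker(\lambda I - T^\calU) < \infty$ and so forces closed range.

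Once $\lambda I - T$ is semi-Fredholm, the fact that $\lambda \in \partial \spec(T)$ supplies invertible operators $\mu I - T$ arbitrarily close in norm, which are Fredholm of index~$0$; openness of the semi-Fredholm region together with local constancy of the index then force $\lambda I - T$ itself to be Fredholm of index~$0$. Kato's analytic perturbation theory next gives that $\dim \ker(zI - T)$ is constant off a discrete set in a neighborhood of $\lambda$, and the presence of resolvent points accumulating at $\lambda$ forces this constant to be $0$, so $\lambda$ is isolated in $\spec(T)$. Classical Riesz--Schauder theory (see, e.g., \cite[Section~III.6.5]{Kato1980}) then promotes $\lambda$ to a pole of the resolvent with finite-dimensional spectral space. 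The main obstacle throughout is the closed-range step: one has to convert the failure of closed range into \emph{infinitely many} linearly independent elements of the ultrapower kernel, and the shifts-of-a-basic-sequence device handles this cleanly but relies essentially on the Bessaga--Pe\l czy\'nski selection principle; the subsequent index, isolation, and pole-order conclusions are then standard Fredholm and Riesz--Schauder consequences.
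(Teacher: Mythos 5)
Your handling of the equivalence (i)~$\Leftrightarrow$~(ii) and of the implication (ii)~$\Rightarrow$~(iii), including the identification of $P^\calU$ with the spectral projection of $T^\calU$, is correct and closely mirrors the paper. The paper, however, does not reprove (iii)~$\Rightarrow$~(i) at all: it simply cites \cite[Corollary~3.2]{Glueck2020}. You instead attempt a self-contained Fredholm-theoretic argument, which is a legitimate strategy, and your treatment of the index, isolation, and pole-order steps is the standard Riesz--Schauder route once semi-Fredholmness is in hand.

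The gap is in the closed-range step. You take a normalized sequence $(x_n)$ with $(\lambda I - T)x_n \to 0$ and $\dist(x_n, \ker(\lambda I - T)) \ge 1/2$, note that it has no norm-convergent subsequence, and then invoke the Bessaga--Pe\l czy\'nski selection principle to extract a basic subsequence. That principle does not apply to an arbitrary bounded sequence without norm-convergent subsequences; it requires the sequence to be weakly null (or, in the version with a Schauder basis in the ambient space, coordinate-wise null) while staying norm-bounded below. In fact the conclusion you need is false in general: in $\ell^2$ take $T$ defined by $Te_1 = 0$ and $Te_n = n^{-1}e_n$ for $n \ge 2$, and let $x_n = e_1 + e_n$. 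Then $\lVert x_n \rVert = \sqrt{2}$, $\dist(x_n, \ker T) = 1$, $Tx_n \to 0$, and $(x_n)$ has no norm-convergent subsequence; yet no subsequence of $(x_n)$ is basic, because with coefficients $a_k = 1$ for $k \le M$ and $a_k = -1$ for $M < k \le 2M$ the ratio $\lVert P_M \xi \rVert / \lVert \xi \rVert$ grows like $\sqrt{M}$. So the basic-subsequence device cannot be applied to the sequence you start from.

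The intended conclusion is nevertheless true, and the fix is to build a \emph{different} sequence rather than pass to a subsequence of the given one. Working in the quotient $E/\ker(\lambda I - T)$, the induced operator is injective with non-closed range, hence not bounded below on any finite-codimensional subspace; this lets you choose, inductively, unit vectors $\bar{x}_k$ with $\lVert (\lambda I - T)\bar{x}_k\rVert < 1/k$ and $\dist\bigl(\bar{x}_{k+1}, \linSpan\{\bar{x}_1,\dots,\bar{x}_k\}\bigr) \ge 1/2$. Lifting to $E$ and taking shifts then produces infinitely many linearly independent elements of $\ker(\lambda I - T^\calU)$, exactly as your basis-constant estimate anticipated, but now with the separation coming by construction rather than from a selection principle. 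With that replacement your argument for (iii)~$\Rightarrow$~(i) goes through; this is essentially the content of the cited \cite[Corollary~3.2]{Glueck2020}.
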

	
\begin{proof}
	The implication \ref{prop:riesz-point-via-ultrapower:item:riesz-point-of-lifting} $\Rightarrow$ \ref{prop:riesz-point-via-ultrapower:item:eigenspace} is obvious and the implication \ref{prop:riesz-point-via-ultrapower:item:eigenspace} $\Rightarrow$ \ref{prop:riesz-point-via-ultrapower:item:riesz-point-of-T} was shown in \cite[Corollary~3.2]{Glueck2020}.

	Let us now prove the implication \ref{prop:riesz-point-via-ultrapower:item:riesz-point-of-T} $\Rightarrow$ \ref{prop:riesz-point-via-ultrapower:item:riesz-point-of-lifting} 
	and the assertion at the end of the proposition. If $\lambda$ is a Riesz point of $T$, then by Proposition~\ref{prop:ultrapower-spectral-properties}, $\lambda$ is a pole of the resolvent $\Res\left(\argument,T^{\calU}\right)$. To see that the corresponding spectral space is finite-dimensional, we argue as in the proof of \cite[Proposition~3.3]{Caselles1987}: If $P$ is the spectral projection of $T$ associated to $\lambda$, then
	\[
		P = \int_C \Res(\mu,T)\, d\mu
	\]
	(where $C$ is a circle with centre $\lambda$ such that it encloses no other point of the spectrum of $T$). Using continuity of the map $T \mapsto T^{\calU}$ and ${\Res(\argument,T)^{\calU} = \Res(\argument,T^{\calU})}$, we obtain that $P^{\calU}$ is the spectral projection of $T^{\calU}$ corresponding to the spectral value $\lambda$. As $\Ima P$ is finite-dimensional, it follows that $\Ima P^{\calU} = (\Ima P)^{\calU}$ is finite-dimensional, as well.
\end{proof}

We note here that, in order to check by means of Proposition~\ref{prop:riesz-point-via-ultrapower} that $\lambda$ is a Riesz point of $T$, it suffices to check the corresponding eigenspace of $T^{\calU}$ is finite-dimensional for some, rather than each, free ultrafilter $\calU$ on $\bbN$.

\bibliographystyle{plain}
\bibliography{literature}

\begin{thebibliography}{10}

\bibitem{AddonaGregorioRhandiTacelli2021}
Davide {Addona}, Federica {Gregorio}, Abdelaziz {Rhandi}, and Cristian
  {Tacelli}.
\newblock {Bi-Kolmogorov type operators and weighted Rellich's inequalities},
  2021.
\newblock Preprint. Available online at https://arxiv.org/abs/2104.03811v1.

\bibitem{Arora2021}
Sahiba {Arora}.
\newblock {Locally eventually positive operator semigroups}.
\newblock {\em {To appear in J. Oper. Theory}}.
\newblock Preprint available online at https://arxiv.org/abs/2101.11386v2.

\bibitem{AroraGlueck2021}
Sahiba {Arora} and Jochen {Gl\"uck}.
\newblock {Uniform (anti-)maximum principles and eventual positivity}.
\newblock Preprint. Available online at https://arxiv.org/abs/2104.12205v1.

\bibitem{Barnes1999}
Bruce~A. {Barnes}.
\newblock {Riesz points and Weyl's theorem}.
\newblock {\em {Integral Equations Oper. Theory}}, 34(2):187--196, June 1999.

\bibitem{Barnes2005}
Bruce~A. {Barnes}.
\newblock Riesz points of upper triangular operator matrices.
\newblock {\em Proceedings of the American Mathematical Society},
  133(5):1343--1347, 2005.

\bibitem{BatkaiKramarRhandi2017}
Andr\'as {B\'atkai}, Marjeta {Kramar Fijav\v{z}}, and Abdelaziz {Rhandi}.
\newblock {\em {Positive operator semigroups: From finite to infinite
  dimensions}}, volume 257.
\newblock Basel: Birkh\"auser/Springer, 2017.

\bibitem{Blake1999}
Mark~D. {Blake}.
\newblock {\em Asymptotically norm-continuous semigroups of operators}.
\newblock PhD thesis, University of Oxford, 1999.

\bibitem{Caselles1987}
Vicent {Caselles}.
\newblock {On the peripheral spectrum of positive operators}.
\newblock {\em {Isr. J. Math.}}, 58:144--160, 1987.

\bibitem{Daners2014}
Daniel {Daners}.
\newblock {Non-positivity of the semigroup generated by the
  Dirichlet-to-Neumann operator}.
\newblock {\em {Positivity}}, 18(2):235--256, 2014.

\bibitem{DanersGlueck2017}
Daniel {Daners} and Jochen {Gl\"uck}.
\newblock {The role of domination and smoothing conditions in the theory of
  eventually positive semigroups}.
\newblock {\em {Bull. Aust. Math. Soc.}}, 96(2):286--298, 2017.

\bibitem{DanersGlueck2018b}
Daniel {Daners} and Jochen {Gl\"uck}.
\newblock {A criterion for the uniform eventual positivity of operator
  semigroups}.
\newblock {\em {Integral Equations Oper. Theory}}, 90(4):19, 2018.
\newblock Id/No 46.

\bibitem{DanersGlueck2018a}
Daniel {Daners} and Jochen {Gl\"uck}.
\newblock {Towards a perturbation theory for eventually positive semigroups}.
\newblock {\em {J. Oper. Theory}}, 79(2):345--372, 2018.

\bibitem{DanersGlueckKennedy2016b}
Daniel {Daners}, Jochen {Gl\"uck}, and James~B. {Kennedy}.
\newblock {Eventually and asymptotically positive semigroups on Banach
  lattices}.
\newblock {\em {J. Differ. Equations}}, 261(5):2607--2649, 2016.

\bibitem{DanersGlueckKennedy2016a}
Daniel {Daners}, Jochen {Gl\"uck}, and James~B. {Kennedy}.
\newblock {Eventually positive semigroups of linear operators}.
\newblock {\em {J. Math. Anal. Appl.}}, 433(2):1561--1593, 2016.

\bibitem{DenkKunzePloss2020}
Robert {Denk}, Markus {Kunze}, and David {Plo{\ss}}.
\newblock {The Bi-Laplacian with Wentzell boundary conditions on Lipschitz
  domains}.
\newblock {\em {To appear in Integral Equations Oper. Theory}}.

\bibitem{EngelNagel2000}
Klaus-Jochen {Engel} and Rainer {Nagel}.
\newblock {\em {One-parameter semigroups for linear evolution equations}},
  volume 194.
\newblock Berlin: Springer, 2000.

\bibitem{FerreroGazzolaGrunau2008}
Alberto {Ferrero}, Filippo {Gazzola}, and Hans-Christoph {Grunau}.
\newblock {Decay and local eventual positivity for biharmonic parabolic
  equations}.
\newblock {\em {Discrete Contin. Dyn. Syst.}}, 21(4):1129--1157, 2008.

\bibitem{GazzolaGrunau2008}
Filippo {Gazzola} and Hans-Christoph {Grunau}.
\newblock {Eventual local positivity for a biharmonic heat equation in
  \(\mathbb R^n\)}.
\newblock {\em {Discrete Contin. Dyn. Syst., Ser. S}}, 1(1):83--87, 2008.

\bibitem{Glueck2016}
Jochen {Gl\"uck}.
\newblock {\em {Invariant sets and long time behaviour of operator
  semigroups}}.
\newblock PhD thesis, Universit{\"a}t Ulm, 2016.
\newblock DOI: 10.18725/OPARU-4238.

\bibitem{Glueck2017}
Jochen {Gl\"uck}.
\newblock {Towards a Perron-Frobenius theory for eventually positive
  operators}.
\newblock {\em {J. Math. Anal. Appl.}}, 453(1):317--337, 2017.

\bibitem{Glueck2020}
Jochen {Gl\"uck}.
\newblock {Spectral gaps for hyperbounded operators}.
\newblock {\em {Adv. Math.}}, 362:24, 2020.
\newblock Id/No 106958.

\bibitem{GlueckMugnolo2021}
Jochen {Gl\"uck} and Delio {Mugnolo}.
\newblock {Eventual domination for linear evolution equations}.
\newblock {\em {to appear in Math. Z.}}

\bibitem{GregorioMugnolo2020}
Federica {Gregorio} and Delio {Mugnolo}.
\newblock {Bi-Laplacians on graphs and networks}.
\newblock {\em Journal of Evolution Equations}, 20(1):191--232, March 2020.

\bibitem{Groh1984}
Ulrich {Groh}.
\newblock {Uniformly ergodic maps on \(C^*\)-algebras}.
\newblock {\em {Isr. J. Math.}}, 47:227--235, 1984.

\bibitem{Heinrich1980}
Stefan {Heinrich}.
\newblock {Ultraproducts in Banach space theory}.
\newblock {\em Journal für die reine und angewandte Mathematik}, 313:72--104,
  1980.

\bibitem{Kato1980}
Tosio {Kato}.
\newblock {\em {Perturbation theory for linear operators. Corr. printing of the
  2nd ed}}, volume 132.
\newblock Springer, Berlin, 1980.

\bibitem{Lotz1986}
Heinrich~P. {Lotz}.
\newblock {Positive linear operators on \(L^p\) and the Doeblin condition}.
\newblock In {\em {Aspects of positivity in functional analysis. Proceedings of
  the Conference held on the Occasion of H. H. Schaefer's 60th Birthday,
  T\"ubingen, June 24-28, 1985}}, pages 137--156. 1986.

\bibitem{LotzSchaefer1968}
Heinrich~P. {Lotz} and Helmut~H. {Schaefer}.
\newblock {\"Uber einen Satz von F. Niiro und I. Sawashima}.
\newblock {\em {Math. Z.}}, 108:33--36, 1968.

\bibitem{LuxemburgZaanen1971}
Wilhelmus A.~J. {Luxemburg} and Adriaan~C. {Zaanen}.
\newblock {\em {Riesz spaces}}, volume~1.
\newblock Elsevier (North-Holland), Amsterdam, 1971.

\bibitem{MagalRuan2018}
Pierre {Magal} and Shigui {Ruan}.
\newblock {\em {Theory and applications of abstract semilinear Cauchy
  problems}}.
\newblock Springer International Publishing, Cham, 2018.

\bibitem{Martinez1993}
Josep {Mart\'{\i}nez}.
\newblock {The essential spectral radius of dominated positive operators}.
\newblock {\em {Proc. Am. Math. Soc.}}, 118(2):419--426, 1993.

\bibitem{MartinezMazon1996}
Josep {Mart\'{\i}nez} and Jos\'e~M. {Maz\'on}.
\newblock {\(C_0\)-semigroups norm continuous at infinity}.
\newblock {\em {Semigroup Forum}}, 52(2):213--224, 1996.

\bibitem{Meyer-Nieberg1991}
Peter {Meyer-Nieberg}.
\newblock {\em {Banach lattices}}.
\newblock Berlin etc.: Springer-Verlag, 1991.

\bibitem{Nagel1986}
Rainer {Nagel}, editor.
\newblock {\em {One-parameter semigroups of positive operators}}, volume 1184.
\newblock Springer, Cham, 1986.

\bibitem{NagelPoland2000}
Rainer {Nagel} and Jan {Poland}.
\newblock {The critical spectrum of a strongly continuous semigroup}.
\newblock {\em Advances in Mathematics}, 152(1):120--133, 2000.

\bibitem{NiiroSawashima1966a}
Fumio {Niiro} and Ikuko {Sawashima}.
\newblock {On positive irreducible operators in an arbitrary Banach lattice and
  a problem of H. H. Schaefer}.
\newblock {\em {Proc. Japan Acad.}}, 42:677--681, 1966.

\bibitem{NiiroSawashima1966b}
Fumio {Niiro} and Ikuko {Sawashima}.
\newblock {On the spectral properties of positive irreducible operators in an
  arbitrary Banach lattice and problems of H. H. Schaefer}.
\newblock {\em {Sci. Pap. Coll. Gen. Educ., Univ. Tokyo}}, 16:145--183, 1966.

\bibitem{NoutsosTsatsomeros2008}
Dimitrios {Noutsos} and Michael~J. {Tsatsomeros}.
\newblock {Reachability and holdability of nonnegative states}.
\newblock {\em {SIAM J. Matrix Anal. Appl.}}, 30(2):700--712, 2008.

\bibitem{Schaefer1974}
Helmut~H. {Schaefer}.
\newblock {\em {Banach lattices and positive operators}}, volume 215.
\newblock Springer, Berlin, 1974.

\bibitem{TaylorLay1986}
Angus~E. {Taylor} and David~C. {Lay}.
\newblock {\em {Introduction to functional analysis}}.
\newblock {Kreiger, 1986}, reprint of the second edition, 1986.

\bibitem{Thieme1998}
Horst~R. {Thieme}.
\newblock {Balanced exponential growth of operator semigroups}.
\newblock {\em {J. Math. Anal. Appl.}}, 223(1):30--49, 1998.

\bibitem{Webb1987}
Glenn~F. {Webb}.
\newblock {An operator-theoretic formulation of asynchronous exponential
  growth}.
\newblock {\em Transactions of the American Mathematical Society},
  303(2):751--763, 1987.

\bibitem{Yosida1980}
Kosaku {Yosida}.
\newblock {\em {Functional analysis. 6th ed}}, volume 123.
\newblock Springer, Berlin, 1980.

\bibitem{Zaanen1983}
Adriaan~C. {Zaanen}.
\newblock {\em {Riesz spaces II}}, volume~30.
\newblock Elsevier (North-Holland), Amsterdam, 1983.

\end{thebibliography}

\end{document}